 \newcommand{\ignore}[1]{}
\definecolor{pgreen}{rgb}{0,0.5,0}
\newcommand{\C}{\mathbb{C}}
\newcommand{\R}{\mathbb{R}}
\newcommand{\inertiA}{(n_+(A),n_0(A),n_-(A))}
\newcommand{\inertiB}{(n_+(B),n_0(B),n_-(B))}
\newcommand{\inertiAB}{(n_+(A,B),n_0(A,B),n_-(A,B),n_C(A,B),n_\infty(A,B))}
\newcommand{\inertiAaB}{(n_+(A-aB),n_0(A-aB),n_-(A-aB))}
\newcommand{\inertibBA}{(n_+(bB-A),n_0(bB-A),n_-(bB-A))}
\def\C{\mathbb{C}}
\def\R{\mathbb{R}}
\newif\if@borderstar
\def\bordermatrix{\@ifnextchar*{%
 \@borderstartrue\@bordermatrix@i}{\@borderstarfalse\@bordermatrix@i*}%
}
\def\@bordermatrix@i*{\@ifnextchar[{\@bordermatrix@ii}{\@bordermatrix@ii[()]}}
\def\@bordermatrix@ii[#1]#2{%
\begingroup
 \m@th\@tempdima8.75\p@\setbox\z@\vbox{%
 \def\cr{\crcr\noalign{\kern 2\p@\global\let\cr\endline }}%
 \ialign {$##$\hfil\kern 2\p@\kern\@tempdima & \thinspace %
  \hfil $##$\hfil && \quad\hfil $##$\hfil\crcr\omit\strut %
  \hfil\crcr\noalign{\kern -\baselineskip}#2\crcr\omit %
  \strut\cr}}%
 \setbox\tw@\vbox{\unvcopy\z@\global\setbox\@ne\lastbox}%
 \setbox\tw@\hbox{\unhbox\@ne\unskip\global\setbox\@ne\lastbox}%
 \setbox\tw@\hbox{%
  $\kern\wd\@ne\kern -\@tempdima\left\@firstoftwo#1%
  \if@borderstar\kern 2pt\else\kern -\wd\@ne\fi%
 \global\setbox\@ne\vbox{\box\@ne\if@borderstar\else\kern 2\p@\fi}%
 \vcenter{\if@borderstar\else\kern -\ht\@ne\fi%
  \unvbox\z@\kern -\if@borderstar2\fi\baselineskip}%
 \if@borderstar\kern-2\@tempdima\kern2\p@\else\,\fi\right\@secondoftwo#1 $%
 }\null \;\vbox{\kern\ht\@ne\box\tw@}%
\endgroup
}
\title{
Inertia laws
and localization of 
 real eigenvalues for 
generalized indefinite 
 eigenvalue problems
}%
\author{Yuji Nakatsukasa%
\thanks{Version of \today. Mathematical Institute,
        University of Oxford,
        Oxford, OX2 6GG, UK.
        (\texttt{Yuji.Nakatsukasa@maths.ox.ac.uk})
Supported by JSPS as an Overseas Research Fellow.}
\and 
Vanni Noferini\thanks{Department of Mathematical Sciences, University of Essex, Wivenhoe Park, Colchester, CO4 3SQ, United Kingdom
                   (\texttt{vnofer@essex.ac.uk}).
                  }
    }
\date{}
\newcommand{\rank}{\text{rank}\:}
\def\C{\mathbb{C}}
\def\R{\mathbb{R}}
\def\mymatrix#1{\null\,\vcenter{\normalbaselines\m@th
    \ialign{\hfil$##$\hfil&&\quad\hfil$##$\hfil\crcr
      \mathstrut\crcr\noalign{\kern-\baselineskip}
      #1\crcr\mathstrut\crcr\noalign{\kern-\baselineskip}}}\,}
\let\oldref\ref
\def\ref#1{{\normalfont\oldref{#1}}}
\def\eqref#1{{\normalfont(\oldref{#1})}}
\DeclareMathOperator{\sign}{sign}
\newtheorem{remark}[theorem]{Remark}
\newcommand{\TheTitle}{
Inertia laws
and localization of 
 real eigenvalues 
}
\newcommand{\TheAuthors}{Y. Nakatsukasa and V. Noferini}
\headers{\TheTitle}{\TheAuthors}
\begin{document}
\maketitle

\begin{abstract}
Sylvester's law of inertia states that the number of positive, negative and zero eigenvalues of  Hermitian matrices  is preserved under congruence transformations. 
The same is true of generalized Hermitian definite eigenvalue problems, in which the two matrices are allowed to undergo different congruence transformations,
 but not for the indefinite case. 
In this paper we investigate the possible change in inertia under congruence for generalized Hermitian indefinite eigenproblems, and derive sharp bounds that show the inertia of the two individual matrices often still provides useful information about the eigenvalues of the pencil, especially when one of the matrices is almost definite. 
A prominent application of the original Sylvester's law is in finding the number of eigenvalues in an interval. 
Our results can be used for estimating the number of real eigenvalues 
in an interval
for generalized indefinite and nonlinear eigenvalue problems. 
\end{abstract}

\begin{keywords}
Sylvester's law of inertia, generalized indefinite eigenvalue problem, number of eigenvalues in an interval, 
congruence transformation, nonlinear eigenvalue problems
\end{keywords}

\begin{AMS}
15A18, 15A22, 65F15
\end{AMS}

\section{Introduction}
Let $A=A^* \in \C^{n \times n}$. The \emph{inertia}~\cite{hornjohn} of $A$ is the triple 
\begin{equation}  \label{eq:oriinertia}
\inertiA \in \mathbb{N}^3  
\end{equation}
 where $n_+(A),n_0(A)$, and $n_-(A)$ denote, respectively, the number of positive, zero, and negative eigenvalues of $A$. Clearly, the inertia cannot be any triple of nonnegative integers, because its elements must satisfy the constraints $$n_+(A) + n_0(A) + n_-(A) = n, \qquad n_+(A) + n_-(A) = \rank(A).$$
It is also common to define the \emph{signature} of $A$ as $s(A)=n_+(A) - n_-(A)$.

Sylvester's law of inertia~\cite{hornjohn} identifies the orbits by congruence as  the equivalence classes prescribed by inertia. It is named after J. J. Sylvester, who first proved the result~\cite{syl1852}. We give below a formal statement and a concise (albeit not elementary) algebraic proof.

\begin{theorem}[Sylvester's law of inertia]\label{thm:syl}
Let $A=A^*, B=B^* \in \C^{n \times n}$. $A$ and $B$ are congruent, i.e., there exists an invertible matrix $X \in \C^{n \times n}$ such that $A=X^* B X$, if and only if $A$ and $B$ have the same inertia.
\end{theorem}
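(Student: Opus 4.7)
The plan is to establish both directions by reducing any Hermitian matrix, via an explicit congruence, to a canonical diagonal form with only $+1$, $-1$, and $0$ on the diagonal, whose block sizes are exactly those recorded by the inertia. The main tool is the spectral theorem: $A=U\Lambda U^*$ with $U$ unitary and $\Lambda$ real diagonal. Since $U^{-1}=U^*$, the unitary diagonalization is itself a congruence, so $A$ is congruent to $\Lambda$. Conjugating further by the invertible diagonal $D=\diag(|\lambda_i|^{-1/2})$ on the nonzero entries (and leaving the zero entries fixed), followed by a permutation, yields the canonical form
\[
S_A = \diag(I_{n_+(A)},\,-I_{n_-(A)},\,0_{n_0(A)}).
\]

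The backward direction is then immediate: if $A$ and $B$ have the same inertia, then $S_A=S_B$, and since congruence is an equivalence relation and both $A$ and $B$ are congruent to this common canonical matrix, they are congruent to each other.

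For the forward direction I would show that each of $n_+,n_-,n_0$ is a congruence invariant. Invariance of $n_0$ is automatic from $\rank(X^*BX)=\rank(B)$ for invertible $X$. For $n_+$ I would appeal to the subspace characterization
\[
n_+(A)=\max\{\dim V : v^*Av>0 \text{ for all } 0\neq v\in V\},
\]
which is preserved under $A\mapsto X^*BX$ because the quadratic form identity $v^*(X^*BX)v=(Xv)^*B(Xv)$ together with the fact that $v\mapsto Xv$ is a linear isomorphism converts a maximal positive subspace for one form into a maximal positive subspace for the other. The invariance of $n_-=n-n_+-n_0$ then follows.

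The main obstacle is justifying the subspace characterization of $n_+$. This requires a short dimension-counting argument via the spectral decomposition: the positive eigenspace of $A$ achieves the lower bound, while any subspace of dimension exceeding $n_+(A)$ must intersect the $(n-n_+(A))$-dimensional invariant subspace spanned by eigenvectors with nonpositive eigenvalue, producing a nonzero $v$ with $v^*Av\leq 0$. Once this lemma is in place, both implications of the theorem follow cleanly from the canonical-form reduction.
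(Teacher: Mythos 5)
Your proposal is correct and follows essentially the same route as the paper's proof: the ``if'' direction via the spectral theorem and an explicit diagonal congruence (you merely pass through the canonical form $\diag(I_{n_+},-I_{n_-},0)$ rather than mapping one diagonalization directly onto the other), and the ``only if'' direction via the invariance of the rank and of the maximal dimension of a subspace on which the quadratic form is positive definite. The only difference is that you spell out the dimension-counting lemma behind that subspace characterization, which the paper leaves implicit.
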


\begin{proof}
Suppose that $A$ and $B$ have the same inertia. Then, with no loss of generality we may assume that $A$ and $B$ are diagonal (by the spectral theorem) and that $A_{ii}$ and $B_{ii}$ have the same sign (if not, note that there exist a permutation matrix $P$ such that $B=P^T C P$ and $A,C$ have this property). Hence, one can define $X$ to be diagonal with $X_{ii} = (A_{ii}/B_{ii})^{1/2}$ if $B_{ii} \neq 0$ and $X_{ii}=1$ otherwise.

For the converse, observe that $n_+(A)$ is the maximal dimension of a subspace over which the quadratic form defined by $A$ is positive definite; as such, it is invariant by any change of basis (which is equivalent to a congruence on $A$). Similarly, the rank is invariant by congruence. We conclude that if $A$ and $B$ are congruent then they must have the same inertia.
\end{proof}

While the ``if'' direction of this proof is not much more than an immediate corollary of the spectral theorem, the argument to prove the ``only if'' implication is arguably more advanced, and it implicitly relies on the minmax Rayleigh characterization of the eigenvalues of a Hermitian matrix.

Sylvester's law of inertia is a useful tool in many applications, including counting the number of eigenvalues of a symmetric matrix in an interval 
and
the design of algorithms that compute eigenvalues (or singular values) with high relative accuracy~\cite[Ch.~5]{demmelbook}, counting the number of eigenvalues above (or below) a certain value. 

A few generalizations of Sylvester's law of inertia are available in the literature. Kosti\'c and Voss~\cite{kosti2013sylvester} introduced Sylvester-like laws of inertia for nonlinear eigenvalue problems, but they dealt with situations in which a minmax-type characterization of the eigenvalues exist. Instead, we are mainly concerned with those 
that do not fall into this category, for example the generalized indefinite eigenvalue problem,  so that the results in \cite{kosti2013sylvester} are inapplicable. 
Bilir and Chicone~\cite{bilirchicone98} studied the inertia of the real parts of the eigenvalues of quadratic matrix polynomials whose leading and trailing coefficients are Hermitian, and the second coefficient has a positive definite Hermitian part.
Going back to matrix, as opposed to nonlinear, eigenvalue problems, Ikramov~\cite{ikramov01} showed that
two normal matrices are congruent if and only if they have the same number of eigenvalues on any semiline starting from $0$ in the complex plane. 

In this work, we aim to extend Sylvester's classical result in a different direction. Specifically, we will analyze Hermitian (\emph{indefinite}) generalized, polynomial, and generally nonlinear eigenvalue problems. We will see that, although it is not generally possible to determine the number of positive, zero, negative, and overall real eigenvalues, there exist nontrivial bounds on these quantities that only depend on the inertia of the trailing and (when appropriate) leading coefficients of the eigenvalue problem, and as such are invariant by congruence on them. 


As in the original law of inertia, our approach can be used to obtain bounds for the number of eigenvalues lying in an interval for generalized Hermitian indefinite eigenvalue problems, which are particularly useful when one of the matrices is nearly positive (or negative) definite. Moreover, we show that such results can be extended to nonlinear eigenvalue problems. 

Our results are useful in a number of applications. For example, estimating the number of eigenvalues in an interval is a key component for an efficient eigensolver based on splitting the spectrum, for example using contour integration; see~\cite{di2016efficient} and the references therein. 
Generalized Hermitian indefinite eigenvalue problems arise for instance 
in optimization problems~\cite{adachi2017solving}, 
tensor decomposition~\cite{Lathauwer2006}, and
when one uses a structured-preserving linearization of a Hermitian matrix polynomial (e.g., one arising from a bivariate zerofinding problem~\cite{biroots}), for example the $\mathbb{DL}$ linearization~\cite{Mackey05vectorspaces, m4revsimax} or the family of block symmetric linearizations described in~\cite{dopicosymlin}. In all these applications \cite{adachi2017solving}, \cite{Lathauwer2006} and \cite{biroots}, it is the real eigenvalues that are of interest. 
Eigenvalues of indefinite pairs play a crucial role also in preconditioning, for example when a constraint preconditioner is used for indefinite linear systems~\cite{keller2000constraint}. 


The structure of the paper is as follows. In Section~\ref{sec:classical} we revisit Sylvester's law of inertia for classical and generalized Hermitian definite eigenvalue problems. In particular, we give an analytic proof  of the ``only if'' implication in Theorem~\ref{thm:syl} by using elementary arguments, namely continuity of eigenvalues and the intermediate value theorem. In Section~\ref{sec:generalized} we extend the argument to analyze the generalized indefinite eigenvalue problem and obtain bounds for its inertia. 
In Section~\ref{sec:applications} we discuss applications and variants, including bounding the number of eigenvalues 
in any real interval, i.e., not necessarily $(0,\infty)$ or $(-\infty,0)$.
In Section~\ref{sec:nonlinear} we discuss the polynomial eigenvalue problem, and more generally the nonlinear eigenvalue problem. 
Until Section~\ref{sec:nonlinear} we focus on counting eigenvalues with their algebraic multiplicity; Section~\ref{sec:geom} discusses bounds when counting eigenvalues using their geometric multiplicity. Numerical experiments are presented in Section~\ref{sec:exp} to illustrate the results.

\section{Sylvester's law of inertia revisited}\label{sec:classical}

The following result (see~\cite[Sec. II.5]{kato} and~\cite[Sec. 5]{MNTX16} for a more thorough discussion) is key for our analysis.

\begin{theorem}\label{thm:conteigs}
Let $\Omega \subseteq \R$ be an interval, and let $F(t)$ be an $n \times n$ complex matrix whose entries depend continuously on a real parameter $t$ and such that for all $t \in \Omega$ the eigenvalues of $F(t)$ are real. Then, there exist $n$ continuous functions $\lambda_i(t)$, $i=1,\dots,n$, such that for all $t \in \Omega$ they are the eigenvalues of $F(t)$.
\end{theorem}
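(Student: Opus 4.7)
The plan is to combine two classical facts: that the roots of a monic polynomial depend continuously on its coefficients (as an unordered multiset in $\C$), and that for real tuples, sorting in nondecreasing order provides a canonical continuous labeling. Since by assumption all roots of the characteristic polynomial $p_t(\lambda) = \det(\lambda I_n - F(t))$ are real for every $t\in\Omega$, the sorted labeling is the natural candidate for the continuous functions $\lambda_i(t)$.

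First I would observe that the coefficients of $p_t$ are polynomials in the entries of $F(t)$, hence continuous in $t$, and then invoke the standard Ostrowski-type root continuity result: for any $t_0 \in \Omega$ and $\varepsilon>0$, there exists $\delta>0$ such that whenever $|t-t_0|<\delta$, the roots of $p_t$ and those of $p_{t_0}$ can be matched by a bijection $\sigma_t$ (counting algebraic multiplicity) with $|\mu - \sigma_t(\mu)|<\varepsilon$ for every root $\mu$ of $p_{t_0}$. Next, I would set $\lambda_1(t)\le\cdots\le\lambda_n(t)$ to be the eigenvalues of $F(t)$ in nondecreasing order---well defined by the real-spectrum hypothesis---and use the rearrangement inequality: for any sorted real tuples $a_1\le\cdots\le a_n$ and $b_1\le\cdots\le b_n$ and any permutation $\pi$, one has $\max_i|a_i-b_i|\le\max_i|a_i-b_{\pi(i)}|$. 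Applying this with $a_i=\lambda_i(t_0)$, the $b_i$ being the sorted roots of $p_t$, and $\pi=\sigma_t^{-1}$ immediately yields $|\lambda_i(t)-\lambda_i(t_0)|<\varepsilon$, proving continuity of each $\lambda_i$.

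The main subtlety I anticipate is the rearrangement inequality itself, which I would establish by a bubble-sort argument: swapping an inverted adjacent pair $(\pi(i),\pi(i+1))$ with $\pi(i)>\pi(i+1)$ never increases $\max(|a_i-b_{\pi(i)}|,|a_{i+1}-b_{\pi(i+1)}|)$, as a short case analysis on the relative order of $a_i,a_{i+1},b_{\pi(i)},b_{\pi(i+1)}$ on the real line confirms; iteratively removing inversions reduces any $\pi$ to the identity without ever increasing the maximum matched distance. It is essential that the spectrum is real throughout $\Omega$, both to make sorting meaningful and to access the rearrangement inequality; for a general complex-valued continuous family a continuous labeling still exists, but only through considerably more delicate Rellich-type analytic perturbation arguments as in~\cite{kato}, which the elementary sort-and-match approach does not reach.
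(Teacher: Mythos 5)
Your argument is correct as a proof of the statement as it stands: the coefficients of $p_t(\lambda)=\det(\lambda I_n-F(t))$ are continuous in $t$, the Ostrowski-type matching of roots is standard, and the sorting lemma you invoke (that the nondecreasing pairing minimizes the maximum matched distance, provable by the adjacent-swap case analysis you sketch) is true, so the sorted functions $\lambda_1(t)\le\cdots\le\lambda_n(t)$ are continuous on $\Omega$, one-sided continuity at endpoints included. The comparison with the paper is somewhat moot, however, because the paper does not prove this theorem at all: it is quoted as a known result with pointers to Kato and to Mehrmann--Noferini--Tisseur--Xu, and the authors later lean on the stronger (Rellich-type) regularity available in those references. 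That difference matters downstream: your sorted labeling delivers continuity, which is all Theorem~\ref{thm:conteigs} claims and all that the intermediate-value arguments in Sections~\ref{sec:classical}--\ref{sec:generalized} need, but it does not give smooth or analytic eigenvalue branches, whereas the paper's later discussion of multiplicities of zeros of eigenfunctions and of sign characteristics (e.g.\ in Theorem~\ref{thm:almostdef} and the definite-polynomial discussion, via \cite[Prop.~5.5]{MNTX16}) implicitly uses the analytic Rellich parametrization, for which sort-and-match is insufficient. One small inaccuracy in your closing remark: the existence of a continuous labeling for a general complex-valued continuous family on a real interval is again an elementary selection fact (Kato, Sec.~II.5) and does not require analytic perturbation theory; Rellich's theorem is about analyticity/differentiability of the branches for Hermitian analytic families, not about mere continuity.
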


In this paper, we will take $F(t)$ to be Hermitian for all values of $t \in \Omega$, which of course guarantees that its eigenvalues are real. 
F. Rellich~\cite{rellich37,rellichbook} pioneered the study of this special case, and we will sometimes call the $\lambda_i(t)$ ``Rellich's eigenfunctions'', or just ``eigenfunctions'' for brevity\footnote{In other contexts, ``eigenfunction'' may mean the eigenvector of linear operators defined on functional spaces. We never use the term in that sense in this paper, so no ambiguity should arise.}.

We first set $F(t) = A - I t$ where $A=A^* \in \C^{n \times n}$ has inertia $\inertiA$. Clearly, the assumptions of Theorem~\ref{thm:conteigs} are satisfied on $\Omega = \R$, and it is particularly easy to write down explicitly the functions 
$$\lambda_i(t) = \lambda_i(A) - t,$$
where $\lambda_i(A)$ is the $i$th eigenvalue of $A$. With this view, the eigenvalues of $A$ are precisely the values of $t$ for which one of Rellich's eigenfunctions $\lambda_i(t)$ of $F(t)$ has a zero. See Figure~\ref{fig:sym} (left) for an illustration.

\begin{figure}
  \begin{minipage}[t]{0.5\hsize}
      \includegraphics[width=0.9\textwidth]{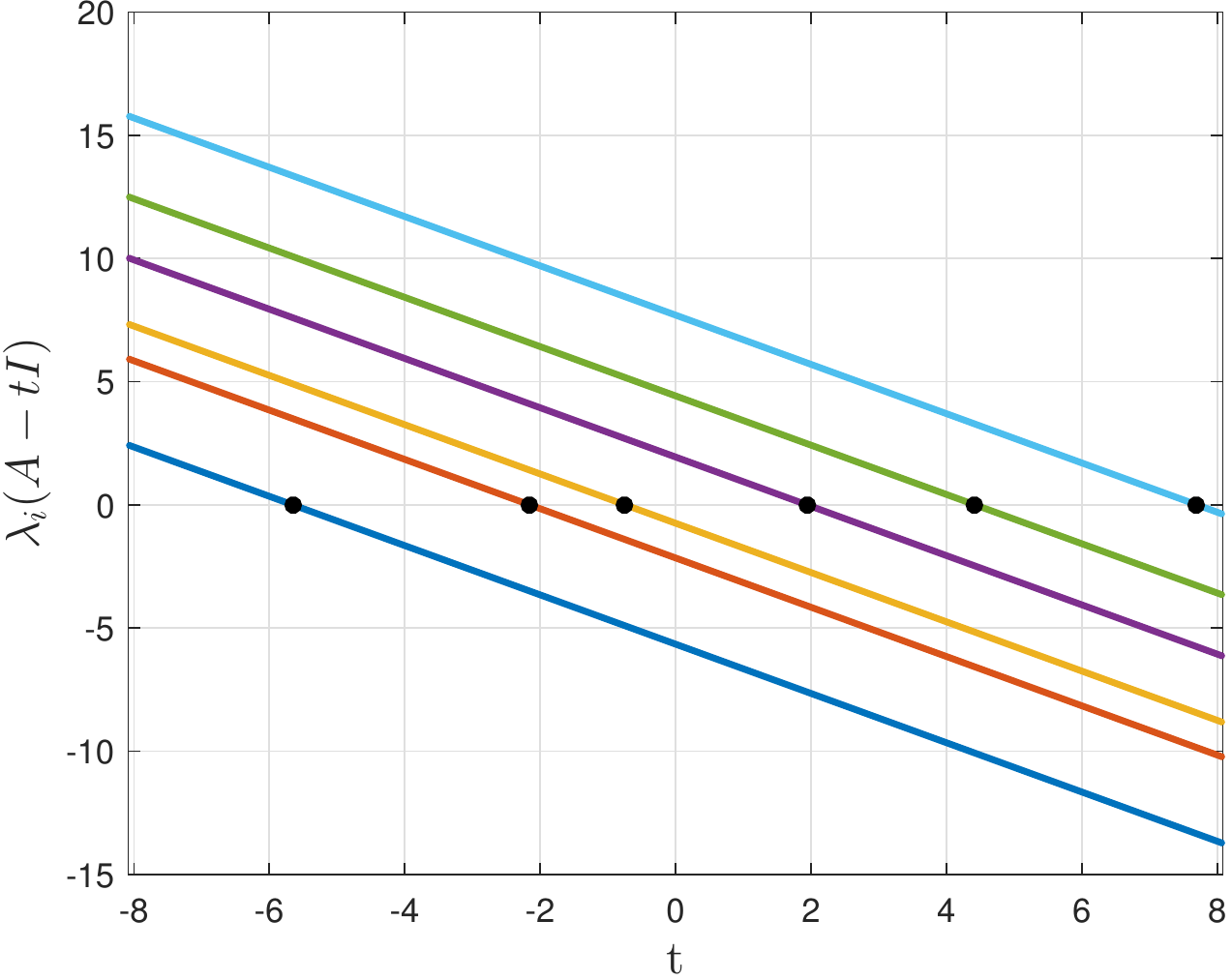}
  \end{minipage}   
  \begin{minipage}[t]{0.5\hsize}
      \includegraphics[width=0.9\textwidth]{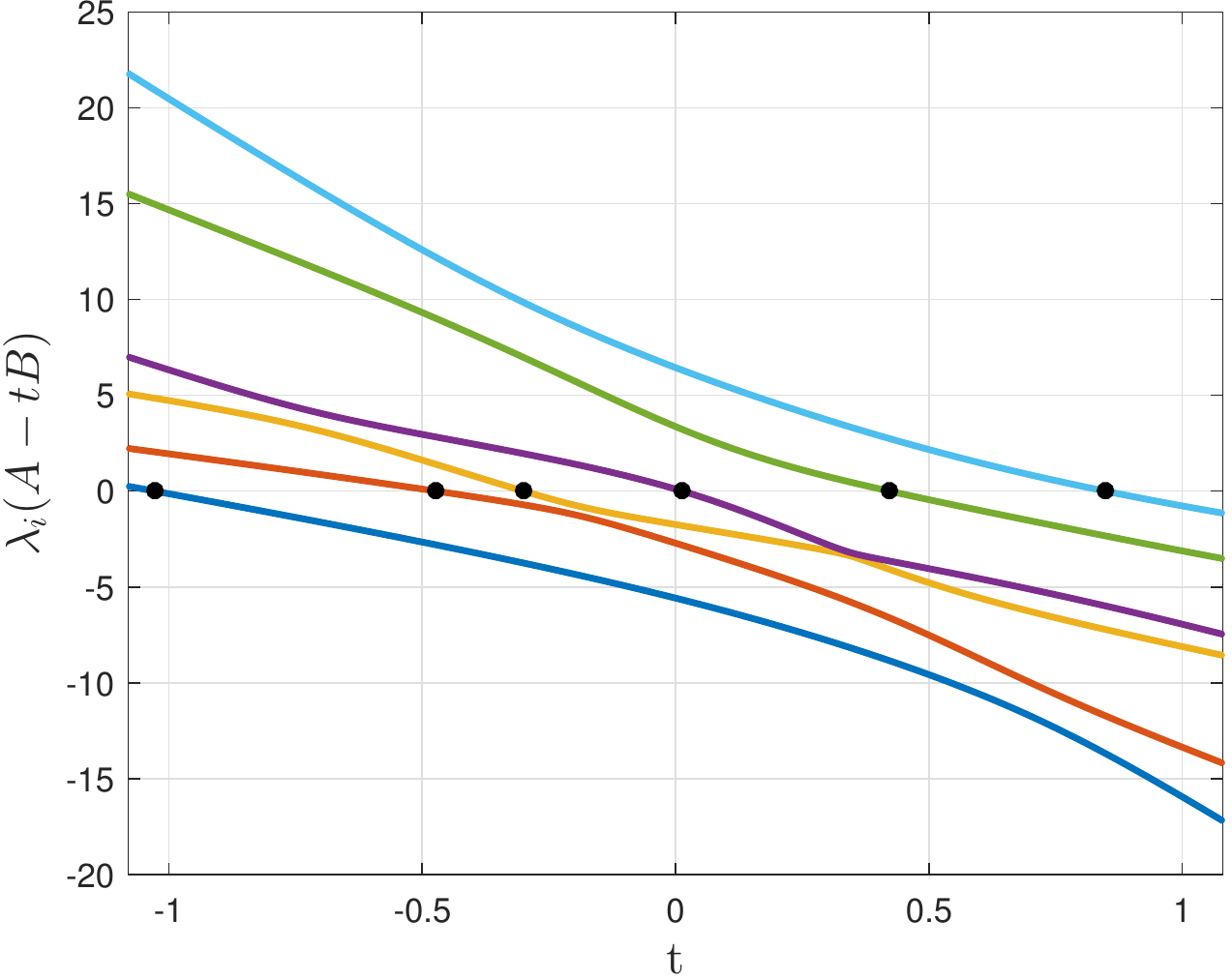}
  \end{minipage}
  \caption{Eigenvalues of $6\times 6$ matrices of form $A-tI$ (left) and $A-tB$, $B\succ 0$ (right). The eigenvalues of $(A,B)$, shown as the abscissae of the black dots, are the roots of the curves. The inertiae of $A$, and hence of the pencil, 
are $(3,0,3)$ in both figures.}
  \label{fig:sym}
\end{figure}


Our second step is to consider a generalized Hermitian definite eigenvalue problem, and to study its eigenvalues (whose formal definition is given in Section \ref{sec:generalized}). This corresponds to $F(t)=A-t B$, with $B=B^* \neq I$ but $B \succ 0$. The eigenfunctions $\lambda_i(t)$ are no longer necessarily affine functions, but they are still strictly decreasing functions. This can be proved, for instance, by Weyl's theorem. See Figure~\ref{fig:sym} (right).
In particular, for this choice of $F(t)$ one has that $\lambda_i(t)\rightarrow-\infty$ as $t\rightarrow +\infty$ and $\lambda_i(t)\rightarrow+\infty$ as $t\rightarrow -\infty$; implying that if a curve $\lambda_i(t)$ takes a positive value (resp. negative) at $t=0$, then it must have a root on $(0,\infty)$ (resp. $(-\infty,0)$). 
Therefore the number of positive (resp., zero, negative) eigenvalues is still exactly equal to that of $A$. This observation constitutes an analytic proof of the ``only if'' part of Theorem~\ref{thm:syl}. Indeed, it suffices to observe that $\det (X^* A X - t I) = \det(X)^2 \det(A - t X^{-*} X^{-1})$ and that $X^{-*}X^{-1}$ is positive definite by construction.


So far, we have provided an unusual analytic approach to the subject of inertia to reinvent the wheel, but we have not obtained any new results. Crucially, though, this technique relies on the very general Theorem~\ref{thm:conteigs}, and therefore it is particularly suitable to be carried over to other types of eigenvalue problems. Before proceeding with this task, let us gain further insight into the difficulties that arise in locating the eigenvalues of the pencil when the definiteness assumption is dropped by setting $F(t)=A-tB$ with $A$ and $B$ Hermitian, but $B$ no longer positive definite\footnote{Note that the cases of  $B$ negative definite or $A$ (positive or negative) definite are also trivial, in the sense of being analogous to the the case $B \succ 0$. For example, we can set, respectively, $F(t) = A + t B$, $F(t) = tA  - B$, $F(t) = tA  + B$.
Similarly, if a linear combination $\alpha A+\beta B$ for some $\alpha,\beta\in\mathbb{R}$ is positive definite, then one can consider $F(t)=A-t(\alpha A + \beta B)$: indeed, note that the eigenfunctions of the latter have a zero at $\frac{\lambda}{\alpha+\beta \lambda}$ if and only if $\lambda$ is an eigenvalue of the generalized eigenvalue problem $A-z B$.
}. Proofs analogous to the one we gave for Theorem~\ref{thm:syl}, relying on the minmax characterization, will clearly face the difficulty that such characterizations are unavailable.

To illustrate the idea, suppose that $B$ has only $1$ negative eigenvalue and $n-1$ positive eigenvalues. Then, our results show that at least $n-2$ eigenvalues are real; see the left plot of Figure~\ref{fig:symindef} for an illustration with $n=6$. The inertia of $A$ is $(3,0,3)$, so three of the six curves $\lambda_i(A-tB)$ take positive values at $t=0$. As $t\rightarrow\infty$, however, the inertia of $A-tB$ must match that of $-B$, which is $(1,0,5)$. Hence, five curves must be negative for $t$ sufficiently large.  It follows from the intermediate value theorem that at least two curves must intersect the $t$-axis. We conclude that at least two of the generalized eigenvalues of the pencil $(A,B)$ are positive, and analogously the pencil has at least two negative eigenvalues. 
In this example the bound matches the exact value, but to be convinced that such lower bound may be an underestimate, see the right plot of Figure~\ref{fig:symindef}. We have kept the same $A$, but changed $B$ so that it now has inertia $(3,0,3)$. The intermediate value theorem gives the trivial lower bound zero for the number of positive eigenvalues. Nonetheless, $(A,B)$ has two positive eigenvalues.

\begin{figure}
  \begin{minipage}[t]{0.5\hsize}
      \includegraphics[height=50mm]{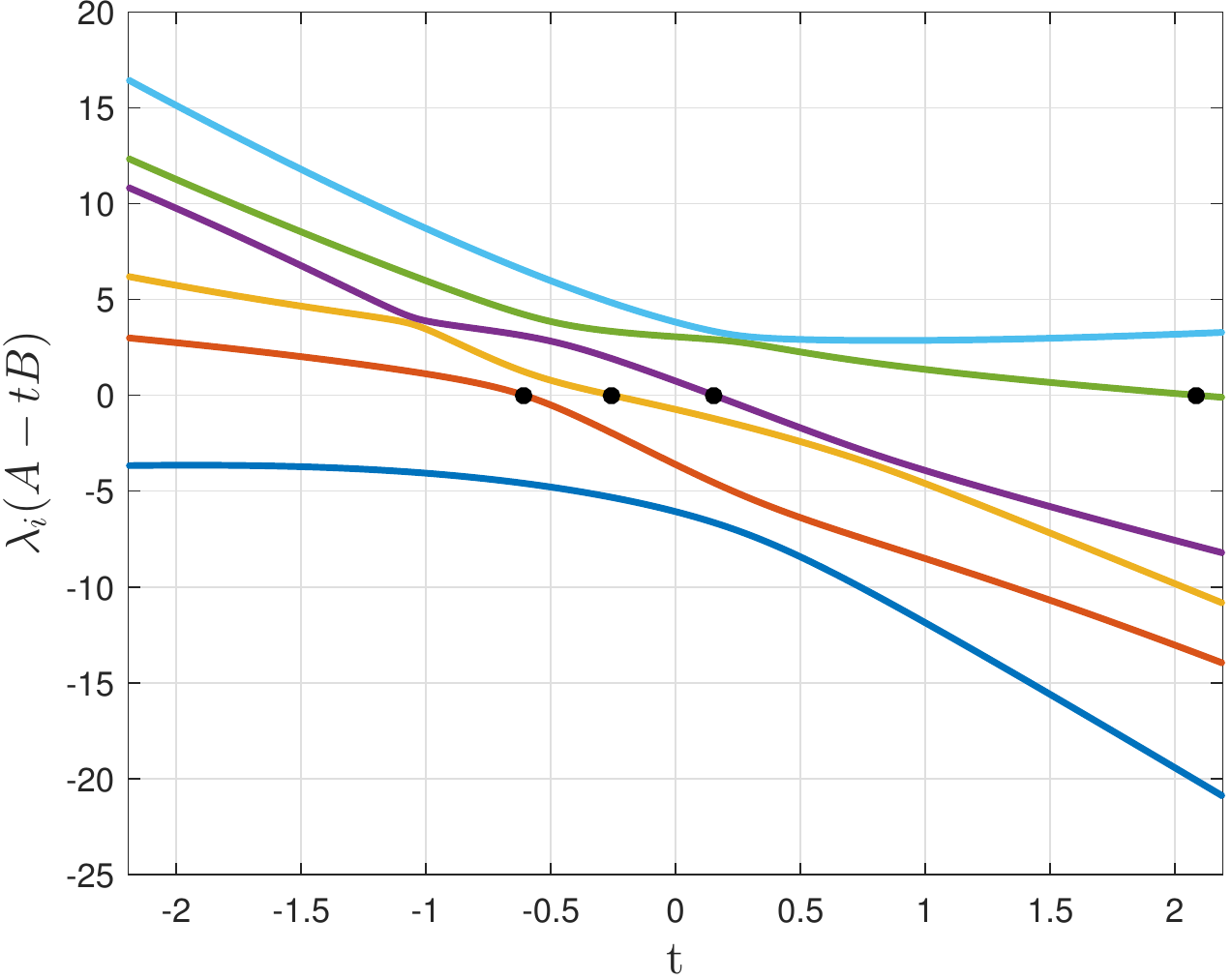}
  \end{minipage}   
  \begin{minipage}[t]{0.5\hsize}
      \includegraphics[height=50mm]{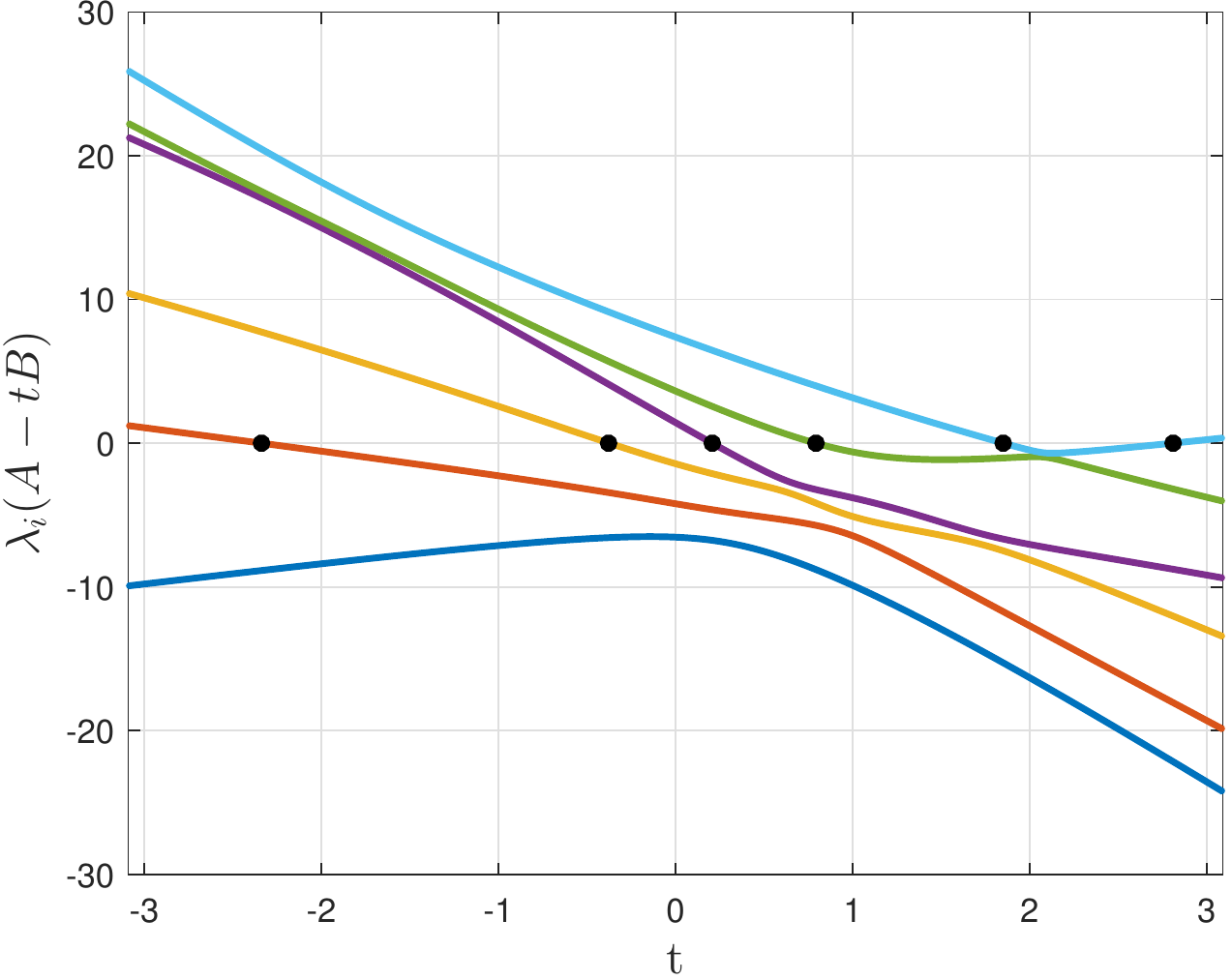}
  \end{minipage}
  \caption{Plot of the eigenvalues of $A-tB$ for $A,B$ both indefinite.
 The inertia of $A$ is $(3,0,3)$ in both cases. 
Left: $B$ has inertia $(5,0,1)$. Our result proves that $(A,B)$ must have at least two negative eigenvalues, and the bound is attained in both these examples. 
Right: $A$ and $B$ have inertia $(3,0,3)$. Our result gives a lower bound of zero negative eigenvalues, but there are two of them. 
}
  \label{fig:symindef}
\end{figure}

In Section~\ref{sec:generalized}, we make these observations more general and more precise.

\section{Inertia bounds for indefinite pencils}\label{sec:generalized}

Given $A=A^*, B=B^* \in \C^{n \times n}$, the Hermitian generalized eigenvalue problem is to find the values of $z$ such that the rank of $A-z B$ is not maximal. Formally, we define $A-\infty B := B$ and $\C^*:=\C \cup \{\infty\}$. We say that $\lambda \in \C^*$ is an eigenvalue of the pencil $A-zB$, or equivalently of $(A,B)$, if
\begin{equation}  \label{eq:defeig}
\rank(A-\lambda B)<\max_{z \in \C^*} \rank (A-z B) =: r.  
\end{equation}
This generalizes the standard definition $Ax=\lambda Bx$, $0 \neq x \in \C^n$, which is valid for \emph{regular} generalized eigenvalue problems, i.e., when $r=n$.
The right-hand side of the above equation, $r$, is sometimes called the \emph{normal rank}, or simply rank, of the pencil $A-z B$.
Note that, invoking Theorem~\ref{thm:conteigs}, this definition implies that a real eigenvalue must be a  zero of at least one non-identically zero eigenfunction $\lambda_i(t)$ of $F(t)=A-tB$, $t \in \R$. The definition~\eqref{eq:defeig} also ensures that there are at most $r$ eigenvalues, counted with (either algebraic or geometric) multiplicities~\cite{dopicoindexsum}. 
Here, the geometric multiplicity of an eigenvalue $\lambda$ is the rank drop 
$r - \rank(A-\lambda B)$. The algebraic multiplicity is the sum of the multiplicities of the zeros of the non-identically zero eigenfunctions; see~\cite[Def.~2.3]{MNTX16}. Clearly, the algebraic multiplicity is at least as large as the geometric counterpart.

There may be strictly less than $r$ eigenvalues; for example, if 
\begin{equation}  \label{eq:abzsingular}
A-zB  = \begin{bmatrix}
0 & 1 & -z\\
1 & 0 & 0\\
-z & 0 & 0
\end{bmatrix}  
\end{equation}
 there are no eigenvalues, even though the rank is $2$. 
See, e.g., \cite{dopicoindexsum} for a sharp characterization of the number of the eigenvalues.

As we have seen in Section~\ref{sec:classical}, when $B$ (or $A$) is positive (or negative) definite, the number of positive, zero, and negative eigenvalues can be determined from the inertia of $A$ (or $B$), and this task has a close relationship with Sylvester's classical result on the inertia of matrices. Similar remarks can be made when there exist scalars $\alpha,\beta \in \mathbb{R}$ such that $\alpha A+\beta B$ is positive definite; we will discuss this situation in detail in Section~\ref{sec:applications}.

When no such $(\alpha,\beta)$ can be found,  the generalized eigenvalue problem is said to be indefinite, and it becomes somewhat more difficult. 
One complication with generalized indefinite eigenvalue problems is the potential existence of nonreal eigenvalues and eigenvalues at infinity. Another is that the pencil may be singular, which means that the normal rank is deficient, or equivalently, that $\det (A-zB) = 0$ for all $z\in\mathbb{C}$.  For example, the pencil~\eqref{eq:abzsingular} is singular.
\begin{remark}
Having a connection with Sylvester's law of inertia in mind, we also note that it may happen that $A- z B$ is a regular pencil, i.e. $\det (A- z B)$ is not identically zero, but there exist invertible $X,Y$ for which 
the pencil $X^*AX-z Y^*B Y$ is singular. For example, take
\begin{equation}\label{eq:mortimer}
A=\begin{bmatrix}
1 & 0\\
0 & 0
\end{bmatrix},\quad B=\begin{bmatrix}
0 & 0\\
0 & 1
\end{bmatrix},\quad X=I_2,\quad Y=\begin{bmatrix}
0 & 1\\
1 & 0
\end{bmatrix}.
\end{equation} 
The results that we develop in the current section provide localization bounds for the eigenvalues that only depend on the inertia of the two matrix coefficients of the generalized eigenvalue problem, and therefore are invariant under such congruence transformations of the pencil. In other words, we give bounds that
 hold regardless of whether or not the resulting pencil $X^*AX - z Y^*B Y$ is singular.
\end{remark}

In order to take into account eigenvalues that are nonreal and at infinity, we define the inertia of $A-z B$ as the quintuple
\begin{equation}  \label{eq:inertiadef}
  \inertiAB. 
\end{equation}
The entries of the inertia of $A-z B$ are nonnegative integers that count the number of, resp., positive, zero, negative, complex, and infinite eigenvalues, where each eigenvalue is counted with its algebraic multiplicity. 
Most, but not all, bounds we derive are also valid for the geometric multiplicity; we describe this in Section~\ref{sec:geom}.
We note that our definition is different from the one by Bilir and Chicone~\cite{bilirchicone98}, who examined the real parts of the eigenvalues (and hence no $n_C(A,B)$ arises). 

There appear to be no known 
 bounds for the inertia \eqref{eq:inertiadef} of generalized indefinite eigenvalue problems. 
The purpose of Theorem~\ref{thm:main} below is to fill in this gap.

\begin{theorem}\label{thm:main}
Let $A,B$ be Hermitian matrices and denote their inertia by $\inertiA$ and $\inertiB$. Also define for notational convenience

\[ n_{++} = n_+(A) + n_+(B), \quad 
n_{--}=n_-(A)+n_-(B), \]  
\[ n_{+-} = n_+(A) + n_-(B),\quad 
n_{-+}= n_-(A)+n_+(B),   \]
\[ \delta = n_0(A) - n_0(B), \]
and $N_{++}=\max(n_{++},n_{--}), N_{+-}=\max(n_{+-},n_{-+})$.
 Then the inertia of the generalized eigenvalue problem $A - z B$ as in \eqref{eq:inertiadef} satisfies 
  \begin{align}
N_{++} -n 
&\leq n_+(A,B) \leq 2 n - |\delta| - N_{+-},   \label{eq:posin}     \\
\delta& \leq  n_0(A,B)\leq 3n - N_{++} - N_{+-} - n_0(B),   \label{eq:zeroin}    \\
N_{+-}- n &\leq n_-(A,B) \leq 2 n - |\delta| - N_{++},\label{eq:negin}    \\
0 &\leq  n_C(A,B)\leq 2 \min(n_+(A),n_-(A),n_+(B),n_-(B)),   \label{eq:compin}    \\
-\delta &\leq  n_\infty(A,B)\leq 3n - N_{++}- N_{+-} - n_0(A).   \label{eq:infin}    
  \end{align}
  Moreover, denoting by
 $n_R(A,B)$ the number of finite real eigenvalues of the generalized eigenvalue problem $A-z B$, we have 
\begin{equation}\label{eq:realin}
|s(B)|
\leq  n_R(A,B)\leq n - n_0(B).  
\end{equation}
\end{theorem}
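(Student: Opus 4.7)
The plan is to extend the analytic strategy of Section~\ref{sec:classical} by applying Theorem~\ref{thm:conteigs} to $F(t)=A-tB$ with $t\in\R$, obtaining $n$ continuous Rellich eigenfunctions $\lambda_1(t),\dots,\lambda_n(t)$. A preliminary structural observation is that exactly $N_{\mathrm{id}}:=n-r$ of them are identically zero, where $r$ is the normal rank of the pencil; since $r\ge\max(\rank A,\rank B)$ we have $N_{\mathrm{id}}\le\min(n_0(A),n_0(B))$. The signs of the $\lambda_i(t)$ at $t=0$ are prescribed by the inertia of $A$, while the rescaling $F(t)/t\to -B$ as $|t|\to\infty$ shows that, among the non-identically-zero eigenfunctions, $n-n_0(B)$ of them diverge with sign $\sign(-t\,\mu_i(B))$, where $\mu_i(B)\neq 0$ is the associated eigenvalue of $B$, while the remaining $n_0(B)-N_{\mathrm{id}}$ stay bounded at $\pm\infty$ and account for eigenvalues at infinity of the pencil.

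For the lower bounds I would count zeros of the $\lambda_i(t)$ on $(0,+\infty)$ and on $(-\infty,0)$ via the intermediate value theorem. On $(0,+\infty)$, at least $\max(0,n_+(A)-n_-(B))=\max(0,n_{++}-n)$ eigenfunctions go from positive to negative and at least $\max(0,n_-(A)-n_+(B))=\max(0,n_{--}-n)$ go from negative to positive; since $n_{++}+n_{--}\le 2n$ these two counts cannot both be positive, so their sum reduces to $\max(0,N_{++}-n)$, giving the lower bound of~\eqref{eq:posin}, and the symmetric argument on $(-\infty,0)$ yields that of~\eqref{eq:negin}. The bounds $n_0(A,B)\ge\max(0,\delta)\ge\delta$ and $n_\infty(A,B)\ge\max(0,-\delta)\ge-\delta$ follow by counting the non-identically-zero eigenfunctions that vanish at $t=0$, respectively stay bounded as $|t|\to\infty$, corrected by $N_{\mathrm{id}}\le\min(n_0(A),n_0(B))$; together they imply $n_0+n_\infty\ge|\delta|$. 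Finally, $n_R\ge|s(B)|$ comes from comparing the asymptotic sign patterns at $\pm\infty$: the same bookkeeping shows that at least $|n_+(B)-n_-(B)|$ of the unbounded eigenfunctions must reverse sign between the two limits and hence cross zero.

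The upper bounds follow by combining the above lower bounds with two global constraints. The first,
\[
n_+(A,B)+n_-(A,B)+n_0(A,B)+n_C(A,B)+n_\infty(A,B)\le n,
\]
reflects that the total algebraic multiplicity cannot exceed the normal rank, and yields the upper bound in~\eqref{eq:posin} upon substituting $n_-\ge N_{+-}-n$ and $n_0+n_\infty\ge|\delta|$; symmetrically it gives~\eqref{eq:negin}. The second, $n_R(A,B)\le n-n_0(B)$, follows because $\det(A-zB)$---or, in the singular case, the nontrivial invariant factors of the regular Kronecker part---has degree at most $\rank B=n-n_0(B)$. Substituting $n_+\ge N_{++}-n$ and $n_-\ge N_{+-}-n$ into $n_0\le n_R-n_+-n_-$ yields~\eqref{eq:zeroin}; exchanging the roles of $A$ and $B$ via the reversal $z\mapsto 1/z$ gives the dual identity $n_+(A,B)+n_-(A,B)+n_\infty(A,B)\le n-n_0(A)$ and hence~\eqref{eq:infin}.

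The principal obstacle I expect is~\eqref{eq:compin}: complex eigenvalues are invisible to the real sign-tracking framework, so I must instead appeal to the structural theory of Hermitian pencils. Each conjugate pair $(\mu,\bar\mu)$ with $\mu\notin\R$ carries in the Kronecker canonical form a two-dimensional $A$- and $B$-neutral invariant subspace, and packing these subspaces inside each of the four definite cones of $A$ and $B$ forces $n_C(A,B)\le 2\min(n_+(A),n_-(A),n_+(B),n_-(B))$. A further delicate point, to be handled uniformly, is the singular case; I plan to reduce to the regular part of the Kronecker canonical form and to verify that the singular blocks correspond exactly to the $N_{\mathrm{id}}$ identically zero eigenfunctions and contribute nothing to any of the counts in~\eqref{eq:inertiadef}.
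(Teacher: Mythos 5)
Most of your argument coincides with the paper's: the lower bounds in \eqref{eq:posin}--\eqref{eq:infin} and \eqref{eq:realin} come from tracking the signs of the Rellich eigenfunctions of $A-tB$ at $t=0$ and $t=\pm\infty$ (the paper uses the compactified parametrization $A\sin\theta-B\cos\theta$, you use the rescaling $F(t)/t$, which is the same bookkeeping), the identically zero eigenfunctions are handled via $n-r\leq\min(n_0(A),n_0(B))$, the upper bounds in \eqref{eq:posin}, \eqref{eq:negin}, \eqref{eq:zeroin}, \eqref{eq:infin} follow by subtracting the lower bounds from the total count, and $n_R(A,B)\leq n-n_0(B)$ is obtained by you from a degree/rank argument and by the paper from $n_R\leq r-n_\infty\leq n-n_0(B)$; both are fine. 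One arithmetic slip: $n_+(A)-n_-(B)$ is not $n_{++}-n$; the eigenfunctions that are positive at $t=0$ and fail to cross zero on $(0,\infty)$ are those ending at a \emph{nonpositive} eigenvalue of $B$, so the correct count is $n_+(A)-n_-(B)-n_0(B)=n_{++}-n$. Your stated bound is the right one, so this is cosmetic.

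The genuine gap is \eqref{eq:compin}. First, your premise that complex eigenvalues are invisible to the sign-tracking framework is not correct: the paper obtains \eqref{eq:compin} purely by counting, via $n_C(A,B)\leq r-n_R(A,B)-n_\infty(A,B)$ together with $n_R\geq|s(B)|$ and $n_\infty\geq n_0(B)-(n-r)$, which gives $n_C\leq n-n_0(B)-|s(B)|=2\min(n_+(B),n_-(B))$, and then the $A$-part by swapping the roles of $A$ and $B$ (the reversed pencil has the same number of nonreal eigenvalues). Second, your structural substitute does not, as sketched, prove the stated bound: a subspace that is $A$- and $B$-\emph{neutral} of dimension $m$ only yields $m\leq\min(n_+(A),n_-(A))+n_0(A)$, hence $n_C\leq 2\min(n_+(A),n_-(A))+2n_0(A)$, strictly weaker than \eqref{eq:compin} whenever $A$ (or $B$) is singular; moreover the two-dimensional span of eigenvectors for $\mu$ and $\bar\mu$ is generally not neutral (the cross term $x^*Ay$ need not vanish), and the Kronecker canonical form is a form under strict equivalence, which does not preserve the inertias of $A$ and $B$, so it cannot be "packed into the definite cones" of the original matrices. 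What your route actually requires is the Lancaster--Rodman canonical form of Hermitian pencils under \emph{congruence} (including its singular blocks), from which the restriction of both $A$ and $B$ to the spectral part associated with the nonreal eigenvalues has inertia $(n_C/2,\,0,\,n_C/2)$; additivity of inertia over the congruence direct sum then gives $n_\pm(A),n_\pm(B)\geq n_C/2$. With either that repair or the paper's counting argument, the remainder of your proof is sound.
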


\begin{remark}
Of course, we also have the trivial bounds: lower bound 0 and upper bound $n$. 
It is possible that some of the bounds~\eqref{eq:posin}--\eqref{eq:compin} are no more informative. 
\end{remark}

\begin{proof}[Proof of Theorem~\ref{thm:main}]
We start from zero and infinite eigenvalues. We need the following definitions: let $r$ be the rank of the pencil $A - z B$. Let $N=n_+(A,B) + n_-(A,B) + n_0 (A,B) + n_\infty (A,B) + n_C(A,B)$ be the total number of eigenvalues counted with algebraic multiplicities. Then $N \leq r$. The number of intersections of the Reillich's eigenfunctions with the real axis is equal to, 
counting multiplicities,
$N-n_C(A,B) - n_\infty(A,B)$. Futhermore, there are $n-r$ eigenfunctions identically equal to zero.

The number of Rellich's eigenfunctions which are $0$ at $t=0$ is $n_0(A)$; of those, $n-r$ correspond to identically zero eigenfunctions. Hence, it is immediate that $n_0(A,B) \geq n_0(A) - (n-r)$.
Using $\rank(B) = n - n_0 (B) \leq r$, we note that $n-r \leq n_0(B)$, and hence,  
\[ n_0(A,B) \geq n_0(A) - n_0(B).\]
Switching the roles of $A$ and $B$ yields the lower bound for $n_\infty(A,B)$.

Now we come to the positive eigenvalues. To avoid normalization issues at infinity, it will be convenient to consider the mapping $A - z B \mapsto \sin(\theta) (A - \cot(\theta) B)$ and look at the eigenvalues of $A \sin (\theta) - B \cos (\theta)$ for $\theta \in [0,\pi]$, and at the corresponding eigenfunctions as functions of $\theta$. The number of positive eigenvalues is the number of zeros in $(0,\pi/2)$ of the non-identically zero eigenfunctions. Let us classify each eigenfunction according to its sign at $\theta=\pi/2$ ($z=0$) and its sign at $\theta=0$ ($z=+\infty$). Note that there are 9 possible types.

Any eigenfunction that is positive at $\theta=\pi/2$ corresponds to a positive eigenvalue of $A$; in order for this eigenfunction not to correspond to a real positive eigenvalue of the pencil, then it must be $\geq 0$ at $\theta=0$, corresponding to a nonpositive eigenvalue of $B$. Hence, we have the lower bound

\[  n_+(A,B) \geq n_+(A) - n_0(B) - n_-(B) = n_+(A) + n_+(B) - n.  \] 

Since an analogous argument can be given by switching the roles of $+$ and $-$, we also obtain

\[  n_+(A,B) \geq  n_-(B) + n_-(A) - n.  \]

The lower bounds for the negative eigenvalues are obtained immediately via the mapping $B \mapsto -B$.

For the upper bounds, observe that the number of positive eigenvalues is bounded above by the total number of eigenvalues, minus the number of infinite, zero and negative eigenvalues. Using the bounds previously obtained this yields

\[ n_+(A,B) \leq r - |\delta| + \min(n-n_-(A)-n_+(B),n-n_+(A)-n_-(B))  \]

Observe now that $r \leq n$ to obtain the bound for positive eigenvalues. The bound for negative eigenvalues is obtained similarly by sending $B \mapsto -B$. The upper  bounds for zero and infinite eigenvalues are obtained analogously, by using the lower bounds. We omit the details.

We turn to \eqref{eq:realin}.
For the upper bound, 
using the lower bound for $n_\infty(A,B)$ in~\eqref{eq:infin} with $\rank(A) = n - n_0 (A)\leq r$ we obtain
$$n_R(A,B) \leq r - n_\infty(A,B) \leq r - n_0(B)+n_0(A) \leq n - n_0(B).$$ 
For the lower bound $s(B)$, observe that, setting $F(t)=A-tB$ in Theorem~\ref{thm:conteigs}, the Rellich's eigenfunctions $\lambda_i(t)$ must (altogether) have at least $|n_+(B)-n_-(B)|$ zeros on $(-\infty,\infty)$. 

It remains to establish the upper bound for the number of complex eigenvalues~\eqref{eq:compin}. 
We trivially have $n_C(A,B)\leq n-n_R(A,B)-n_\infty(A,B)$. 
Now since $n_\infty(A,B)\geq n_0(B)-(n-r)$, 
 using~\eqref{eq:realin} we obtain
\[
n_C(A,B)\leq r-n_R(A,B)-n_\infty(A,B) \leq n-n_0(B)-|n_+(B)-n_-(B)|. 
\]
Consider the case $n_+(B)>n_-(B)$. Then since $n_+(B)=n-n_-(B)-n_0(B)$, we obtain 
\[
n_C(A,B)\leq n-n_0(B)-(n_+(B)-n_-(B)) 
= 2n_-(B). 
\]
The case $n_+(B)>n_-(B)$ is analogous. 
By symmetry of the statement~\eqref{eq:compin}
with respect to the matrices $A,B$, we also conclude that 
$n_C(A,B)$ is bounded by $2n_-(A)$ and  $2n_+(A)$. 
\end{proof}

It is worth remarking that all the bounds that appear in Theorem~\ref{thm:main} 
are invariant under congruence. Therefore all the bounds remain the same when applied to $(A,B)$ or $(X^*AX,Y^*AY)$ for nonsingular matrices $X,Y$. 
In particular, if $B \succ 0$ then $B$ is congruent to $I$ and we recover the classical Sylvester's theorem of inertia: the bounds~\eqref{eq:posin}--~\eqref{eq:negin} become equalities on $n_-(X^*AX,Y^*BY),n_0(X^*AX,Y^*BY)$ and $n_+(X^*AX,Y^*BY)$. Otherwise (when $B$ is not positive definite), the congruence transformation may result in different $n_-(X^*AX,Y^*BY),n_0(X^*AX,Y^*BY)$ and $n_+(X^*AX,Y^*BY)$, but within the intervals given in~\eqref{eq:posin}--~\eqref{eq:negin}. 
 
 \begin{remark}\label{canonical}
 An alternative, and more algebraic, approach to proving Theorem~\ref{thm:main} is possible via the canonical form of Hermitian pencils via congruence~\cite{lancaster2005canonical}. We prefer the analytic approach, because it generalizes to polynomial and nonlinear eigenvalue problems, for which no canonical form is available.
 \end{remark}

\begin{theorem}\label{thm:attainable}
The lower bounds in \eqref{eq:posin}--\eqref{eq:infin} in Theorem~\ref{thm:main} are sharp: given any Hermitian pair $A,B$ and given any of the corresponding lower bounds in \eqref{eq:posin}--\eqref{eq:infin}, suppose that the bound is not trivial (i.e., it is not below $0$ or above $r$). Then, there exist nonsingular matrices $X,Y$ such that
for the pencil $X^* A X - z Y^* B Y$, 
 equality is attained for the bound considered. In particular, it is impossible to obtain a nontrivial lower bound for 
$n_C(A,B)$ from the inertia information of $A,B$. 
\end{theorem}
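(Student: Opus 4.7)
The plan is to reduce the problem to an explicit diagonal construction via Sylvester's law of inertia (Theorem~\ref{thm:syl}). Since $X^*AX$ ranges over all Hermitian matrices with the inertia of $A$ as $X$ varies over invertible matrices, and similarly for $Y^*BY$, it suffices, for any admissible inertia triples $\inertiA$ and $\inertiB$ and any nontrivial lower bound in~\eqref{eq:posin}--\eqref{eq:infin}, to construct diagonal Hermitian matrices $A' = \mathrm{diag}(a_1,\dots,a_n)$ and $B' = \mathrm{diag}(b_1,\dots,b_n)$ with $a_i,b_i \in \{-1,0,1\}$ and with the prescribed inertiae, for which equality holds in the chosen bound. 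For any such pencil the spectrum decouples: $n_+(A',B') = \#\{i : a_ib_i>0\}$, $n_-(A',B') = \#\{i : a_ib_i<0\}$, $n_0(A',B') = \#\{i : a_i=0\neq b_i\}$, $n_\infty(A',B') = \#\{i : a_i\neq 0=b_i\}$, and $n_C(A',B') = 0$. Since every admissible inertia pair is realised by some such diagonal pencil, the vanishing $n_C(A',B')=0$ already settles the last assertion of the theorem: no nontrivial lower bound on $n_C(A,B)$ can depend solely on the inertiae of $A$ and $B$.

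For each of the four remaining bounds, the construction becomes a short combinatorial matching. To attain $n_0(A,B) = \delta > 0$, I would place $n_0(B)$ columns in the pattern $(0,0)$ so that $B'$'s zero block is fully absorbed by $A'$'s, leaving $\delta = n_0(A) - n_0(B)$ columns of the form $(0,\pm 1)$, and distribute the remaining sign counts in any feasible way; the bound on $n_\infty(A,B)$ follows by swapping the roles of $A$ and $B$. For $n_+(A,B) \geq N_{++} - n$ with the bound nontrivial, I would first observe that $n_{++} + n_{--} = 2n - n_0(A) - n_0(B) \leq 2n$, so at most one of $n_{++},n_{--}$ strictly exceeds $n$; suppose $n_{++} > n$ (the case $n_{--} > n$ is symmetric, handled by the analogous construction with the signs $+$ and $-$ interchanged). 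Then I would place exactly $n_{++} - n$ columns of type $(+,+)$, pair all remaining $a_i=+$ columns with $b_i \in \{-1,0\}$, and pair all remaining $b_i=+$ columns with $a_i \in \{-1,0\}$, thereby exhausting the residual counts. Sharpness of $n_-(A,B) \geq N_{+-} - n$ then follows by the substitution $B \mapsto -B$, which negates each eigenvalue and exchanges $n_{++}\leftrightarrow n_{+-}$ and $n_{--}\leftrightarrow n_{-+}$.

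The main obstacle is a careful bookkeeping check: one must verify that whenever the targeted lower bound is nontrivial, the prescribed column counts of $A'$ and $B'$ exactly balance the inertia constraints $n_+(A)+n_0(A)+n_-(A)=n$ and $n_+(B)+n_0(B)+n_-(B)=n$. The dichotomy noted above (at most one of $n_{++}-n$, $n_{--}-n$ being positive, and analogously for $n_{+-},n_{-+}$) keeps the case analysis clean, and in each nontrivial case the residual column counts are seen to accommodate the rest of the sign pattern. With feasibility granted, the eigenvalue counts of the explicit diagonal pencil can be read off directly from the sign pattern and coincide with the target lower bound by construction.
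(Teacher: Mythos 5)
Your proposal is correct and follows essentially the same route as the paper: construct explicit diagonal representatives with entries in $\{-1,0,1\}$ realizing the prescribed inertiae so that the relevant eigenvalue count (read off column-by-column from the sign pattern) equals the lower bound, then transfer to the given pair $A,B$ via Sylvester's law of inertia; the paper's block-identity matrices $\hat A,\hat B$ are just a particular arrangement of your column pairing, and both arguments use the same symmetry reductions ($B\mapsto -B$, swapping $A$ and $B$) and the same observation that diagonal pencils have $n_C=0$, which settles the final assertion.
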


\begin{proof} The proof is constructive: for each lower bound, we exhibit a Hermitian pair $(\hat{A},\hat{B})$ such that $A$, resp. $B$, has the same inertia as $\hat{A}$, resp. $\hat{B}$, and the pencil $\hat{A}-z\hat{B}$ achieves the bound. The result will then follow by Theorem~\ref{thm:syl}.

\begin{itemize}
\item \emph{Equations \eqref{eq:posin}, \eqref{eq:negin}, \eqref{eq:compin}.}
Let us first show how the lower bound on positive eigenvalues can be attained. Without loss of generality, we take 
$n_{++}\geq n_{--}$
 and $n_+(A) \geq n_-(B)$. Indeed, note that 
$A-zB$, $zB-A$, $B-zA$, and $zA-B$
 all have the same number of positive eigenvalues, so if necessary we can swap the roles of $A$ and $B$ and/or multiply both matrices times $-1$.
We take
$$ \hat{A} = \begin{bmatrix}
I_{n_+(A)} & 0 & 0\\
0 & -I_{n_-(A)} & 0\\
0 & 0 & 0
\end{bmatrix}, \qquad \hat{B} = \begin{bmatrix}
-I_{n_-(B)} & 0 & 0\\
0 & 0 & 0\\
0 & 0 & I_{n_+(B))}
\end{bmatrix}. $$
By the assumptions above, the only possibility for a positive eigenvalue is if the blocks $I_{n_+(A)}$ and $I_{n_+(B)}$ superpose, which either happens precisely on a sub-block of size 
$n_{++}-n$
 (if 
this is nonnegative), 
 or it does not happen at all otherwise. Note that this example also attains the lower bound of $0$ complex eigenvalues, and that again 
$A-zB$, $zB-A$, $B-zA$, and $zA-B$ all have the same number of complex eigenvalues so we are not losing in generality.

For the lower bound on negative eigenvalues, we assume without loss of generality that 
$n_{+-}\geq n_{-+}$ and $n_+(A) \geq n_+(B)$, and we change the matrices from the example above as follows: $(\hat{A},\hat{B}) \rightarrow (\hat{A},-\hat{B})$.
\item \emph{Equations \eqref{eq:zeroin}, \eqref{eq:infin}.} We only discuss the lower bound for zero eigenvalues, as the case of infinite eigenvalues can be proved similarly switching the roles of $A$ and $B$. We take $$ \hat{A} = \begin{bmatrix}
0 & 0 & 0\\
0 & I_{n_+(A)} & 0 \\
0 & 0 & -I_{n_-(A)}\\
\end{bmatrix}, \qquad \hat{B} = \begin{bmatrix}
0 & 0 & 0\\
0 & I_{n_+(B)} & 0\\
0 & 0 & -I_{n_-(B))}
\end{bmatrix}, $$
and we observe that $\hat{A} - z \hat{B}$ has no zero eigenvalues if $n_0(A) \leq n_0(B)$, and it has $\delta$ zero eigenvalues otherwise.

\end{itemize}

\end{proof}

\begin{remark}
For the lower bound $|s(B)|\leq  n_R(A,B)$ in~\eqref{eq:realin}, the bound is sharp if we assume the knowledge of only $\inertiB$, i.e, no inertia information is available on $A$. For instance, take 
\[
\hat{A} = 
\begin{bmatrix}
R & 0 & 0\\
0 & D_1 & 0\\
0 & 0 & D_2
\end{bmatrix}
,\qquad
\hat{B} = 
\begin{bmatrix}
T & 0 & 0\\
0 & \pm I_{|s(B)|} & 0\\
0 & 0 & 0 
\end{bmatrix}
\]
where $p=\min(n_+(B),n_-(B))$, the sign in $\pm I_{|s(B)|}$ takes $+1$ if $n_+(B)> n_-(B)$ and $-1$ otherwise (the block is empty if $s(B)=0$), 
$D_1$ and $D_2$ are real diagonal matrices whose precise entries do not matter, 
and 
\begin{equation}
  \label{eq:RT}
R  = 
\begin{bmatrix}
0 & 1\\
1 & 0
\end{bmatrix} \oplus \dots \oplus \begin{bmatrix}
0 & 1\\
1 & 0
\end{bmatrix}  \in \C^{2p \times 2p},\quad 
T =
\begin{bmatrix}
1 & 0\\
0 & -1
\end{bmatrix} \oplus \dots \oplus \begin{bmatrix}
1 & 0\\
0 & -1
\end{bmatrix}   \in \C^{2p \times 2p}.      
\end{equation}

Obtaining a sharp bound using the inertia of both $A$ and $B$ appears to be more difficult, as the possible presence of eigenvalues at $0$ and $\infty$ complicates the situation. We clearly have the lower bound $n_R(A,B)\geq N_{++}+N_{+-} -2n+\delta$ obtained by adding~\eqref{eq:posin}--\eqref{eq:negin}, but neither this nor $|s(B)|$ is always sharp.

If we assume further that $0$ and $\infty$ are not eigenvalues of $(A,B)$, then we obtain the improved and sharp lower bound 
\begin{equation}
  \label{eq:sharplo}
n_R(A,B)\geq |n_+(B)-n_-(A)|+|n_-(A)-n_-(B)|.   
\end{equation}
 The bound is a straightforward consequence of the intermediate value theorem, counting the number of eigenfunctions taking negative values at $t=\infty,0,-\infty$, and noting (by assumption) that no nonzero eigenfunction is 0 at these three points. 
We have $|s(B)|=|n_+(B)-n_-(B)|\leq |n_+(B)-n_-(A)|+|n_-(A)-n_-(B)|$ by the triangular inequality, and the sharpness of the bound~\eqref{eq:sharplo} can be verified by taking
$$ \hat{A} = \begin{bmatrix}
R & 0 & 0& 0\\
0 & D_1 & 0& 0\\
0 & 0 & D_2& 0 \\
0 & 0 & 0& 0
\end{bmatrix}, \qquad \hat{B} = \begin{bmatrix}
T & 0 & 0& 0\\
0 & I_{n_+(B)-p} & 0& 0\\
0 & 0 & -I_{n_-(B)-p}& 0 \\
0 & 0 & 0 & 0
\end{bmatrix}, $$ where 
$R,T$ are as above with $p=\min(n_+(A),n_-(A),n_+(B),n_-(B))$. 
$\hat{A}-z\hat{B}$ has $n_+(B)+n_-(B)-2p=r-2p$ real eigenvalues, and a direct calculation shows that $r-2p=|n_+(B)-n_-(A)|+|n_-(A)-n_-(B)|$, as required.


If assuming that $0,\infty$ are not eigenvalues is undesirable, one can work with a shifted pencil:
one analogously obtains
$n_R(A,B)\geq |n_+(B-\epsilon A)-n_-(A-\epsilon B)|+|n_-(A-\epsilon B)-n_-(B-\epsilon A)|$ for an arbitrary $\epsilon>0$, which is sharp assuming $\epsilon,1/\epsilon$ are not eigenvalues and $(-1/\epsilon,1/\epsilon)$ contains all finite real eigenvalues of $(A,B)$.
\end{remark}

\begin{remark}
Some of the upper bounds in Theorem~\ref{thm:main} are also sharp in the sense of Theorem~\ref{thm:attainable}. Specifically:
\begin{itemize}
\item The upper bounds in \eqref{eq:posin} and \eqref{eq:negin} are not always attainable. For example, if $n=2m$ is even, $\inertiA=(m,m,0)$ and $\inertiB=(0,m,m)$, our upper bound for $n_+(A,B)$ is $n$, but it is possible to prove with other means that no more than $m$ positive eigenvalues can possibly be present.
\item The upper bounds in \eqref{eq:zeroin} and \eqref{eq:infin} are not always attainable. For example, suppose that $n=2m$ is even and $\inertiA=\inertiB=(m,0,m)$. Then, neither zero nor infinite eigenvalues can be present, but our upper bounds are equal to $n$.
\item The upper bound in \eqref{eq:compin} is attained by the very same example as in the lower bound of \eqref{eq:realin}.
\item The upper bound in \eqref{eq:realin} is attained, for example, by any choice of $\hat{A}$ and $\hat{B}$ with both matrices diagonal and with $1,-1,0$ elements on the main diagonal.
\end{itemize}

We also note that, although we opted in Theorem~\ref{thm:main} not to give bounds that depend on the normal rank $r$, the latter 
 can sometimes be computed (for example as  $r = \max_i \rank (A-\mu_i B)$ where $\mu_0,\dots,\mu_n$ are any $n+1$ distinct complex numbers; in practice, 
with one randomly chosen $\mu_0\in\mathbb{C}$, we have $r = \rank (A-\mu_0 B)$  with high probability).
 In this scenario, the following bounds, obtained from the intermediate steps in the proof of Theorem~\ref{thm:main}, are potentially more informative:
 \begin{align}
n_+(A,B) &\leq 3n -r -n_0(A)-n_0(B) - N_{+-},   \label{eq:posin2}    \\
 n_0(A,B) &\geq n_0(A) - n + r, \label{eq:zeroin2}\\
 n_-(A,B) &\leq 3 n - r -n_0(A) -n_0(B)  - N_{++},\label{eq:negin2}   \\
 n_\infty(A,B) &\geq n_0(B) - n + r. \label{eq:infin2}
  \end{align}
In particular, if $r<n$ then \eqref{eq:posin2} and \eqref{eq:negin2} are always more informative, because for a singular pencil it holds
$$ n- r < 2 \min(n_0(A),n_0(B)) = n_0(A)+n_0(B) - |\delta|.$$
(Proof of the last claim: $2 \max (\rank(A),\rank(B)) \leq 2 r < n+r,$ where the last inequality holds as we assume $r<n$.)
\end{remark}

We conclude by pointing out a potentially practically important consequence of Theorem~\ref{thm:main}: if 
$B$ is nearly positive definite, then the inertia of $A-z B$ is not very different from that of $A$. This is explored further in the next section.

\section{Applications and extensions}\label{sec:applications}


\subsection{Bounding the number of real eigenvalues in an interval}\label{sec:interval}
The classical Sylvester's law of inertia is useful 
for counting the number of real eigenvalues on an interval, say, $(a,b)$ with $a<b$. 
For example, when $B \succ 0$, 
$|n_+(A-a B)-n_+(A-b B)|$ 
gives the exact number of the eigenvalues of  $(A,B)$ that lie in the interval 
$(a,b)$. We denote this number by $n_{(a,b)}(A,B)$. A similar notation is used for other types of subsets of the real line, e.g., $n_{[a,b]}(A,B)$ for the number of eigenvalues in the closed interval $[a,b]$.
We also denote by $n_a(A,B)$ and $n_b(A,B)$ the number of eigenvalues of $(A,B)$ that are equal to $a$ and $b$ respectively.

Here we investigate a possible extension of such results to a Hermitian indefinite pair $(A,B)$. Mathematically, this is a simple matter of applying a M\"obius transformation : given $\alpha,\beta,\gamma,\delta \in \R$ satisfying $\alpha \delta  \neq \beta \gamma$, $\delta a = \beta$, $\gamma b = \alpha$ (it is always possible to find four such numbers provided that $a\neq b$), consider the map
\begin{equation}\label{eq:Mobius}
A - B z \mapsto C - y D, \qquad C:= \delta A - \beta B, \quad D:=  \alpha B - \gamma A .  \end{equation}
so that $(a,b)$ is mapped to $(0,\infty)$. 
$n_+(C,D)$ is precisely equal to the number of eigenvalues of the pencil $A-z B$ in the interval $(a,b)$. 
We thus obtain 
\begin{corollary}\label{cor:mobius}
Let $a,b\in\mathbb{R}$ and $A,B$ be Hermitian matrices, and denote the inertia 
of the matrix $A-aB$ by 
$\inertiAaB$, and  that of $bB-A$ by $\inertibBA$. Also define 
\[ n_{++} = n_+(A-aB) + n_+(bB-A), \quad 
n_{--}=n_-(A-aB)+n_-(bB-A), \]  
\[ n_{+-} = n_+(A-aB) + n_-(bB-A),\quad 
n_{-+}= n_-(A-aB)+n_+(bB-A),   \]
\[ \delta = n_0(A-aB) - n_0(bB-A), \]
and $N_{++}=\max(n_{++},n_{--}), N_{+-}=\max(n_{+-},n_{-+})$.
Then 
  \begin{align*}
N_{++} -n &\leq n_{(a,b)}(A,B) \leq 2 n - |\delta| - N_{+-},    \\
\delta& \leq  n_a(A,B)\leq 3n - N_{++} - N_{+-} - n_0(bB-A),  \\
N_{+-} - n &\leq n_{(-\infty,a)\cup (b,\infty)\cup \infty}(A,B) \leq 2 n - |\delta| - N_{++},\\
0 &\leq  n_C(A,B)\leq 2 \min(n_+(A-aB),n_-(A-aB),n_+(bB-A),n_-(bB-A)).     \\
-\delta &\leq  n_b(A,B)\leq 3n - N_{++}- N_{+-}- n_0(A-aB),\\
|s(B)|& \leq  n_R(A,B)\leq n - n_0(bB-A). 
  \end{align*}
\ignore{
  \begin{align*}
\max(n_{++},n_{--}) -n &\leq n_{(a,b)}(A,B) \leq 2 n - |\delta| - \max(n_{+-},n_{-+}),    \\
\delta& \leq  n_a(A,B)\leq 3n - \max(n_{++},n_{--}) - \max(n_{+-},n_{-+})  - n_0(bB-A),  \\
\max(n_{+-},n_{-+}) - n &\leq n_{(-\infty,a)\cup (b,\infty)}(A,B) \leq 2 n - |\delta| - \max(n_{++},n_{--}),\\
0 &\leq  n_C(A,B)\leq 2 \min(n_+(A-aB),n_-(A-aB),n_+(bB-A),n_-(bB-A)).     \\
-\delta &\leq  n_\infty(A,B)\leq 3n - \max(n_{++},n_{--}) - \max(n_{+-},n_{-+}) - n_0(A-aB),\\
|s(B)|& \leq  n_R(A,B)\leq n - n_0(bB-A). 
  \end{align*}
}
In particular, we have the simple lower bound
\begin{equation}  \label{eq:simple}
\max(|n_+(A-aB) - n_+(A-bB)|,|n_+(A-aB) - n_+(A-bB)|) \leq n_{[a,b]}(A,B) .
\end{equation}
\end{corollary}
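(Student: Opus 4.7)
The plan is to reduce the corollary to Theorem~\ref{thm:main} by invoking the Möbius-type substitution spelled out in~\eqref{eq:Mobius}. I would first fix admissible parameters, for example $\delta = \gamma = 1$, so that $\beta = a$, $\alpha = b$, and the requirement $\alpha\delta \neq \beta\gamma$ reduces to $a \neq b$ (which holds since $(a,b)$ is a nondegenerate interval). This choice turns the pencil $A - zB$ into $C - yD$ with $C = A - aB$ and $D = bB - A$, the correspondence between eigenvalues being the Möbius transformation $y = (z - a)/(b - z)$.

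Next I would tabulate how the eigenvalue loci transform: $(a,b) \leftrightarrow (0,\infty)$; $\{a\} \leftrightarrow \{0\}$; $\{b\} \leftrightarrow \{\infty\}$; $(-\infty,a) \cup (b,\infty) \cup \{\infty\}$ mapping bijectively onto $(-\infty,0)$ (with $z = \infty$ sent to $y = -1$); and the nonreal spectrum being preserved. Algebraic multiplicities are preserved because the substitution is a nonsingular change of pencil. This identifies $n_+(C,D)$ with $n_{(a,b)}(A,B)$, $n_0(C,D)$ with $n_a(A,B)$, $n_\infty(C,D)$ with $n_b(A,B)$, $n_-(C,D)$ with $n_{(-\infty,a) \cup (b,\infty) \cup \infty}(A,B)$, and $n_C(C,D)$ with $n_C(A,B)$.

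I would then observe that the inertia of $C$ is $\inertiAaB$ and that of $D$ is $\inertibBA$, so the auxiliary quantities $n_{++}, n_{--}, n_{+-}, n_{-+}, \delta, N_{++}, N_{+-}$ in the corollary are precisely those attached to $(C,D)$ by the definitions in Theorem~\ref{thm:main}. Consequently, every listed inequality drops out of the corresponding bound in~\eqref{eq:posin}--\eqref{eq:realin} applied to $(C,D)$, followed by the dictionary above. The only subtlety is bookkeeping: matching the five rows of Theorem~\ref{thm:main} with the five rows in the corollary, while keeping track that $z = \infty$ for $(A,B)$ becomes a finite negative eigenvalue of $(C,D)$ rather than an infinite one.

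For the simple bound~\eqref{eq:simple}, I would reason directly from continuity of Rellich's eigenfunctions of $F(t) = A - tB$ on $[a,b]$ supplied by Theorem~\ref{thm:conteigs}. Because $n_\pm(A - tB)$ is the number of eigenfunctions $\lambda_i(t)$ positive (respectively negative) at $t$, it can change as $t$ varies only when some $\lambda_i$ crosses zero, and each such crossing is an eigenvalue of $(A,B)$ in $[a,b]$ counted with algebraic multiplicity. Hence $|n_+(A - aB) - n_+(A - bB)| \le n_{[a,b]}(A,B)$, and the analogous bound with $n_-$ in place of $n_+$ follows by identical reasoning. The main obstacle I anticipate in the whole argument is just this bookkeeping, rather than any new mathematical input beyond Theorem~\ref{thm:main} and Theorem~\ref{thm:conteigs}.
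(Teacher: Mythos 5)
Your proposal is correct and follows essentially the same route as the paper: the paper's own proof is precisely the substitution $A\mapsto A-aB$, $B\mapsto bB-A$ via the M\"obius map \eqref{eq:Mobius} (so that the bounds of Theorem~\ref{thm:main} apply to the transformed pencil), together with the intermediate value theorem for \eqref{eq:simple}. Your version simply spells out the dictionary between the spectra of $(A,B)$ and $(C,D)$ in more detail than the paper does.
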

The proof is essentially a substitution $A\mapsto A-aB, B\mapsto bB-A$ in that of Theorem~\ref{thm:attainable}, and the bound~\eqref{eq:simple} is a straightforward consequence of the intermediate value theorem. Note the interval is closed here, which is necessary. 

Note that in this case the congruence transformation (in the sense discussed before Remark~\ref{canonical}) applies to the matrices $C$ and $D$, which are linear combinations of $A,B$. 

\subsection{Inertia of ``nearly definite" matrix pencils}
A typical case where we believe Theorem~\ref{thm:main} would be useful
is when $B$ is ``almost'' definite, in that 
either most of its eigenvalues are positive or most are negative. 
Then the inertia of $A$ gives a good estimate for that of $(A,B)$:
\begin{corollary}\label{cor:almostposdef}
Let $A,B$ be $n\times n$ Hermitian matrices, and suppose that $B$ has 
$n-k$ positive eigenvalues. 
Assume that $k$ is small enough so that $n-k\geq |n_-(A)-n_+(A)|$ holds. Then 
\begin{equation}
  \begin{split}  \label{eq:cor}
n_+(A) -k &\leq n_+(A,B) \leq n_+(A) +k,\\
n_-(A) -k &\leq n_-(A,B) \leq n_-(A) +k,\\
n-2k& \leq n_R(A,B)\leq n.
  \end{split}
  \end{equation}
\end{corollary}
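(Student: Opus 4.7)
The approach is to specialize Theorem~\ref{thm:main} using the inertia information $n_+(B) = n - k$ (which forces $n_-(B), n_0(B) \leq k$), and to combine the resulting bounds with the observation that the total algebraic multiplicity of all eigenvalues is at most the normal rank $r$. For the lower bounds $n_\pm(A,B) \geq n_\pm(A) - k$: since $N_{++}$ and $N_{+-}$ are maxima, $N_{++} \geq n_{++} = n_+(A) + (n-k)$ and $N_{+-} \geq n_{-+} = n_-(A) + (n-k)$. Substituting into \eqref{eq:posin} and \eqref{eq:negin} gives both claims directly.

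For the upper bounds $n_\pm(A,B) \leq n_\pm(A) + k$, I would isolate $n_+(A,B)$ in the inequality $n_+(A,B) + n_-(A,B) + n_0(A,B) + n_\infty(A,B) + n_C(A,B) \leq r$ and substitute the lower bounds obtained above together with the intermediate bounds $n_0(A,B) \geq n_0(A) - (n-r)$ and $n_\infty(A,B) \geq n_0(B) - (n-r)$ derived within the proof of Theorem~\ref{thm:main}. The rank inequality $n - r \leq n_0(B)$ then completes a short algebraic manipulation to yield $n_+(A,B) \leq n_+(A) + k$. The subcase $n_-(A) < k$, in which only $n_-(A,B) \geq 0$ is useful, must be handled separately but leads to the even stronger conclusion $n_+(A,B) \leq n_+(A) + n_-(A) < n_+(A) + k$. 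The upper bound on $n_-(A,B)$ is obtained by the analogous symmetric argument.

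Finally, the upper bound $n_R(A,B) \leq n$ is trivial, and the lower bound $n_R(A,B) \geq n - 2k$ follows from the bound $n_R(A,B) \geq |s(B)|$ in~\eqref{eq:realin} by noting $s(B) = n - k - n_-(B) \geq n - 2k$, together with $|s(B)| \geq 0 \geq n - 2k$ whenever $s(B) < 0$. The main technical point is the upper-bound step, which requires invoking the intermediate bounds from within the proof of Theorem~\ref{thm:main} rather than the looser bounds in its statement; the hypothesis $n-k \geq |n_+(A) - n_-(A)|$ plays no role in the algebra itself but ensures that the intervals in~\eqref{eq:cor} are informative.
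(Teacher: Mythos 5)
Your proposal is correct, and for the upper bounds it takes a genuinely different (and more careful) route than the paper. The paper's own proof is a one-liner: it specializes \eqref{eq:posin} and \eqref{eq:negin} directly, resolving the maxima $N_{++},N_{+-}$ via the hypothesis $n-k\geq|n_+(A)-n_-(A)|$. Your lower bounds coincide with that argument (and you correctly note that resolving the maxima is not even needed, since $N_{++}\geq n_{++}$ and $N_{+-}\geq n_{-+}$ always). For the upper bounds, however, you re-enter the proof of Theorem~\ref{thm:main}: you bound $n_+(A,B)$ by $r$ minus the other eigenvalue counts, insert the intermediate bounds $n_0(A,B)\geq n_0(A)-(n-r)$ and $n_\infty(A,B)\geq n_0(B)-(n-r)$ (i.e.\ \eqref{eq:zeroin2}, \eqref{eq:infin2}), and then use $n-r\leq n_0(B)$; the algebra indeed gives $n_+(A,B)\leq n_+(A)+k+(n-r-n_0(B))\leq n_+(A)+k$, and symmetrically for $n_-(A,B)$. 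This extra care buys something real: the stated bound \eqref{eq:posin} alone yields only $n_+(A,B)\leq n_+(A)+k+\bigl(n_0(A)-|\delta|\bigr)$, which exceeds $n_+(A)+k$ when both $A$ and $B$ are singular (e.g.\ $n_0(A)=n_0(B)=1$), so your detour through the rank-refined bounds is what makes the claimed constant $k$ come out exactly in the degenerate case, whereas the paper's ``direct calculation'' is tight only generically. Your treatment of \eqref{eq:realin} via $s(B)=n-k-n_-(B)\geq n-2k$ matches the intended argument, the separate subcase $n_-(A)<k$ is harmless though unnecessary (the inequality $n_-(A,B)\geq n_-(A)-k$ is valid even when its right-hand side is negative), and your observation that the hypothesis on $k$ is not needed for the validity of the inequalities, only for their informativeness, is consistent with both derivations.
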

\begin{proof}
The bounds follow from~\eqref{eq:posin},~\eqref{eq:negin}
 by direct calculations, 
taking 
$n_{++}\geq n_{--}$ and $n_{+-}\leq n_{-+}$,  
 which hold by assumption. 
\end{proof}
 
 Very similar results hold when 
 either $A$ or $B$ is ``almost'' definite. We omit the details as the statements and proofs are essentially the same -- recall that 
$A-zB$, $zB-A$, $B-zA$, and $zA-B$ all have the same number of positive eigenvalues and the same number of negative eigenvalues.

\subsubsection{Parity of algebraic multiplicities}
Assuming moreover that $a,b$ are not eigenvalues of $A-\lambda B$, then for the algebraic multiplicities,
we can further identify the even/odd parity that $n_{(a,b)}(A,B)$ can take, as follows. For concreteness we treat the case where $B$ has at least as many positive eigenvalues as negative ones; analogous results hold in the opposite case, and they can be derived by taking $B\mapsto -B$.
\begin{theorem}\label{thm:almostdef}
Let $A,B$ be $n\times n$ Hermitian matrices, and suppose that $B$ has 
$n-k$ positive eigenvalues, with $n\geq 2k$, and that neither $a$ nor $b$ is an eigenvalue of the pencil $A-zB$. 
Then, 
\begin{equation}
  \label{eq:abeignum}
 n_{(a,b)}(A,B) = |n_+(A - a B)-n_+(A-b B)|    
   + 2h
\end{equation}
for some  $h \in \{0,1,2,\dots, k \}$. 
In particular, 
  \begin{equation}
    \label{eq:ABrealnum}
|n_+(B)-n_-(B)|    \leq n_R(A,B)     \leq n.
  \end{equation}

\end{theorem}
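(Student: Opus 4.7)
The idea is to analyze the Rellich eigenfunctions $\lambda_1(t), \ldots, \lambda_n(t)$ of $F(t) = A - tB$ on $[a,b]$ (guaranteed by Theorem~\ref{thm:conteigs}) and count their zero-crossings in $(a,b)$. Since neither $a$ nor $b$ is an eigenvalue of $A-zB$, both $A - aB$ and $A - bB$ are nonsingular, so the pencil is regular and no $\lambda_i$ vanishes at the endpoints. Writing $p_a := n_+(A - aB)$ and $p_b := n_+(A - bB)$, the objective is to prove that $n_{(a,b)}(A,B) - |p_a - p_b|$ is a non-negative even integer bounded above by $2k$.

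I would first treat the generic case in which $(A,B)$ has $n$ distinct finite eigenvalues and the Rellich zeros are all simple. Let $N_+$ (resp.\ $N_-$) count the $+\to-$ (resp.\ $-\to+$) zero-crossings of the $\lambda_i$ in $(a,b)$; then $n_{(a,b)}(A,B) = N_+ + N_-$, and since $n_+(A - tB)$ decreases by one at each $+\to-$ crossing and increases by one at each $-\to+$ crossing, we have $N_+ - N_- = p_a - p_b$, giving $n_{(a,b)}(A,B) = |p_a - p_b| + 2\min(N_+, N_-)$. This identifies $h = \min(N_+, N_-)$ and settles the non-negativity and evenness. For the bound $h \leq k$, I would invoke $B$-orthogonality of eigenvectors of distinct finite eigenvalues $\lambda_i, \lambda_j$: one has $v_i^* B v_j = 0$ whenever $\lambda_i \ne \overline{\lambda_j}$, and for any complex eigenvalue the identity $v_i^* A v_i = \lambda_i v_i^* B v_i$ combined with $v_i^* A v_i, v_i^* B v_i \in \R$ forces $v_i^* B v_i = 0$. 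Thus $V^* B V$ is block diagonal, with $1\times 1$ real nonzero blocks for the real eigenvalues and $2 \times 2$ antidiagonal blocks (signature $(+,-)$) for each conjugate pair; applying Sylvester's law (Theorem~\ref{thm:syl}) to $V^* B V$ shows that the number of real finite eigenvalues with $v_i^* B v_i < 0$ is exactly $n_-(B)-m$, where $m$ is the number of conjugate pairs. Differentiating $F(t) v_i(t) = \lambda_i(t) v_i(t)$ gives $\lambda_i'(t_0) = -v_i^* B v_i / v_i^* v_i$ at a crossing, so the sign of $v_i^* B v_i$ is exactly the sign of $-\lambda_i'(t_0)$; restricting to $(a,b)$ yields $N_- \leq n_-(B) \leq k$, hence $h \leq N_- \leq k$.

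For the non-generic case, I would perturb to $(\tilde A, \tilde B) = (A + \epsilon A', B - \epsilon I)$ with generic Hermitian $A'$: for small $\epsilon > 0$, $\tilde B$ has inertia $(n-k,0,k)$, all finite eigenvalues of $(\tilde A, \tilde B)$ are simple, and $a, b$ remain non-eigenvalues because $A - aB, A - bB$ are invertible. Continuity of eigenvalues preserves both $n_{(a,b)}$ and the inertiae at the endpoints, so the generic bound transfers. Finally, for \eqref{eq:ABrealnum} pick $a \ll 0$ and $b \gg 0$, both non-eigenvalues, so that $(a,b)$ encloses every finite real eigenvalue; then $n_{(a,b)}(A,B) = n_R(A,B)$, and the inertia of $A - tB$ for $|t|$ large matches that of $\mp B$, so $n_+(A-aB) \to n_+(B)$ and $n_+(A-bB) \to n_-(B)$. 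Substituting in \eqref{eq:abeignum} produces $n_R(A,B) \geq |n_+(B) - n_-(B)|$, and $n_R(A,B) \leq n$ is trivial. I expect the main obstacle to be the $B$-orthogonality/signature computation in the presence of complex eigenvalues, together with the bookkeeping for the perturbation argument (simultaneously preserving the inertia prescribed for $B$, keeping $a,b$ out of the spectrum, and keeping interval counts invariant).
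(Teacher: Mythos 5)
Your treatment of the generic case (simple crossings, $n$ distinct finite eigenvalues) is correct, and it is a genuinely different route from the paper's: you bound $h$ locally through the sign characteristic, using $\lambda_i'(t_0)=-v_i^*Bv_i/v_i^*v_i$ and the Sylvester count $n_-(V^*BV)=n_-(B)$ to get $N_-\leq n_-(B)\leq k$, whereas the paper never assumes simplicity and instead proves $h\le k$ by a global counting contradiction: if $h>k$, applying \eqref{eq:simple} on $(-t_0,a)$ and $(b,t_0)$ for large $t_0$ would force more than $n$ real eigenvalues. The genuine gap is in your reduction of the general case to the generic one. The claim that ``continuity of eigenvalues preserves $n_{(a,b)}$'' is false when eigenvalues are counted with algebraic multiplicity, which is exactly the setting of Theorem~\ref{thm:almostdef} (Section~\ref{sec:geom} stresses that \eqref{eq:abeignum} is specific to algebraic multiplicities). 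A real eigenvalue of algebraic multiplicity $\geq 2$ with mixed sign characteristic leaves the real axis under arbitrarily small Hermitian perturbations: for $A=\diag(\mu,-\mu)$, $B=\diag(1,-1)$ the pencil has the double real eigenvalue $\mu$, while perturbing the off-diagonal of $A$ by $\epsilon$ gives $\det(A-zB)=-(z-\mu)^2-\epsilon^2$, a nonreal pair; note $k=1$, $n=2k$, so this sits inside your hypotheses. Hence $n_{(a,b)}(\tilde A,\tilde B)$ can be strictly smaller than $n_{(a,b)}(A,B)$. The endpoint inertiae, the parity, and the lower bound do transfer (for small $\epsilon$ the count can only drop, and only by even amounts), but the essential content of the theorem --- the upper bound $h\leq k$ --- proved for $(\tilde A,\tilde B)$ gives no information about $(A,B)$. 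To close the gap you would need either the paper's argument (local types of eigenfunctions as in \cite[Prop.~5.5]{MNTX16}, which handles zeros of arbitrary multiplicity, plus the global contradiction above) or sign-characteristic theory for multiple eigenvalues via the canonical form mentioned in Remark~\ref{canonical}; your $v^*Bv$ computation as written only controls simple crossings.

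Two secondary points. First, your opening inference ``neither $a$ nor $b$ is an eigenvalue $\Rightarrow$ $A-aB$, $A-bB$ nonsingular, so the pencil is regular'' is not valid: eigenvalues are defined through the normal rank in \eqref{eq:defeig}, so a singular pencil (not excluded by the theorem) has $A-aB$ singular even when $a$ is not an eigenvalue; the paper's proof explicitly carries along the $n-r$ identically zero eigenfunctions. In that situation the endpoint inertiae are also not stable under your perturbation, since the zero eigenvalues of $A-aB$ can move either way. Second, your derivation of \eqref{eq:ABrealnum} identifies the limiting inertia of $A-tB$ as $|t|\to\infty$ with that of $\mp B$, which tacitly assumes $B$ nonsingular; this mirrors the paper's own passage to the limit $a=-b\to\infty$, so it is not a point of divergence, but it deserves a remark if $n_0(B)>0$.
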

\begin{proof}
It is clear that 
 \eqref{eq:ABrealnum} can be obtained from~\eqref{eq:abeignum} taking $a=-b$ and letting $b \rightarrow + \infty$.
It remains to prove~\eqref{eq:abeignum}. 

Denote  $a_+=n_+(A-aB)$ and $b_+=n_+(A-bB)$, and suppose that the rank of $A-zB$ is $r$. Then, $n-r$ eigenfunctions are identically zero.
 Recall~\cite[Sec. 5.2]{MNTX16} that for a nonzero continuous function $f(x)$ such that $f(a)f(b) \neq 0$, the \emph{local type of $f(x)$ in the interval $[a,b]$} is the ordered pair $(\sign f(a),\sign f(b))$. Denote by $\ell_\tau$ the number of eigenfunctions of local type $\tau$ on $[a,b]$. Then we have the 
  constraints 
$$ \ell_{(+,+)} + \ell_{(+,-)} = a_+, \qquad \ell_{(+,+)}+\ell_{(-,+)}=b_+,$$
yielding
$$\ell_{(-,+)} - \ell_{(+,-)} =  b_+ - a_+.$$
Noting that a root of even multiplicity corresponds to an even number of eigenvalues (counting algebraic multiplicities),~\cite[Prop.~5.5]{MNTX16} implies that an eigenfunction corresponds to an odd number of eigenvalues (counting algebraic multiplicities) in $(a,b)$ if and only if its  local type is either $(+,-)$ or $(-,+)$. Hence, 
 $n_{(a,b)}(A,B) - |a_+-b_+|$ must be even, i.e., equal to $2h$ for some nonnegative integer $h$.

Suppose $h>k$ and let $t_0>0$ be large enough that the inertia of $A+t_0 B$ and $A-t_0B$ are, respectively, $(n-k,n-r,r+k-n)$ and $(r+k-n,n-r,n-k)$. 
Using~\eqref{eq:simple} with $a\mapsto -t_0,b\mapsto a$ and $a\mapsto b,b\mapsto t_0$, and noting that $a,b$ are not eigenvalues, we see that the number of real eigenvalues is bounded below by
$$ |n-k-a_+| + |a_+-b_+| + |b_+ - k| + 2 h \geq n - 2k + 2h > n,$$
a contradiction. Finally, to show $h \geq 0$, we can invoke the intermediate value theorem as before.

\end{proof}

In particular, when $k$ is known to be small, ~\eqref{eq:abeignum} shows that $|n_+(A-a B)-n_+(A-b B)|$ provides a good lower bound of $n_{(a,b)}(A,B)$. 
A summary of this subsection is that the inertia of Hermitian matrices $A-t B$  for $t\in\mathbb{R}$ gives some information about the real eigenvalues of $(A,B)$, but not sufficient to count the exact number of eigenvalues on an interval.

%

\section{Nonlinear eigenvalue problems}\label{sec:nonlinear}
We now turn to nonlinear eigenvalue problems. 
In this setting, we study $F(z)$, an analytic\footnote{In fact, our results are based on the intermediate value theorem, which holds more generally for continuous matrix-valued functions. We assume analyticity because (1) working over continuous functions requires several extra technicalities (2) in practice, functions of interest are analytic at least in appropriate regions of the complex plane.} matrix-valued function of $z$, which takes Hermitian values for $z \in \R$. Again, its finite eigenvalues are defined as the values of $\lambda \in \C $ such that
\begin{equation}\label{eq:defeig2}
\rank F(\lambda)<\max_{z \in \C } \rank F(z) =: r.
\end{equation}
Examples include polynomial eigenvalue problems 
$F(z) = \sum_{i=0}^k z^iA_i$, $A_i\in\mathbb{C}^{n\times n}$. We assume $A_i=A_i^*$ are all symmetric (or Hermitian); such $F$ are called symmetric (Hermitian) matrix polynomials. As in the linear case, we formally define $F(\infty):=A_k$ and seek eigenvalues in $\C^*$.
As before, we explore Sylvester-like inertia laws for $F$, or rather, lower bounds for the number of eigenvalues in an interval $(a,b)$. 
Obtaining upper bounds appears to be not possible, at least by the approach presented here: indeed, the total number of eigenvalues of $F(z)$ cannot be bounded in general, and in fact it might be not even countable if $F(z)$ is not analytic. There are exceptions, e.g., when $F(z)$ is polynomial (in which case the number of eigenvalues is at most its rank times its degree~\cite{dopicoindexsum}). It is sometimes possible to 
quantify the number of eigenvalues in a given domain when $F(z)$ is assumed analytic~\cite{bindel2015localization} or, again, polynomial~\cite{BNS13, NST15}. 

 
Along the lines of the previous sections, we use the following straightforward result. 

\begin{proposition}\label{prop:matpoly}
Let $F(z)$ be a Hermitian matrix function, with elements that depend analytically in $z$. Let $a<b \in \R$ and suppose that neither $a$ nor $b$ is an eigenvalue of $F(z)$. Then,
the nonlinear eigenvalue problem $F(z)$ has at least
$|n_+(F(a))-n_+(F(b))|$ real eigenvalues in the open interval
 $(a,b)$. 
\end{proposition}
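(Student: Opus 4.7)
The plan is to adapt the intermediate value theorem argument already used in Section~\ref{sec:generalized} for linear pencils, replacing $A-tB$ with the nonlinear function $F(t)$. First, since $F$ is analytic on $\C$ and takes Hermitian values for $t \in \R$, its eigenvalues are real on $\R$, so Theorem~\ref{thm:conteigs} applies (with $\Omega$ any open interval of $\R$ containing $[a,b]$) to yield continuous Rellich eigenfunctions $\lambda_1(t),\dots,\lambda_n(t)$ whose values are the eigenvalues of $F(t)$ for each $t$. Let $r$ denote the normal rank of $F$ as in~\eqref{eq:defeig2}; then $n-r$ of these eigenfunctions are identically zero and the remaining $r$ are not.

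Next I would use the hypothesis that neither $a$ nor $b$ is an eigenvalue of $F$: this means $\rank F(a)=\rank F(b)=r$, so no non-identically-zero eigenfunction vanishes at $a$ or $b$. Consequently each such eigenfunction has a well-defined sign at both endpoints, and among the $r$ non-identically-zero eigenfunctions, exactly $n_+(F(a))$ are positive at $a$ and exactly $n_+(F(b))$ are positive at $b$. Classifying these eigenfunctions into four local types $(+,+),(+,-),(-,+),(-,-)$ on $[a,b]$ and denoting the corresponding counts $\ell_{++},\ell_{+-},\ell_{-+},\ell_{--}$, the identities
\[
\ell_{++}+\ell_{+-}=n_+(F(a)),\qquad \ell_{++}+\ell_{-+}=n_+(F(b))
\]
yield $\ell_{+-}-\ell_{-+}=n_+(F(a))-n_+(F(b))$, so the number of sign-changing eigenfunctions satisfies $\ell_{+-}+\ell_{-+}\ge |n_+(F(a))-n_+(F(b))|$.

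By the intermediate value theorem, each such sign-changing eigenfunction has at least one zero in $(a,b)$; call any such zero $\lambda$. Since at $\lambda$ at least one non-identically-zero eigenfunction vanishes in addition to the $n-r$ identically zero ones, we have $\rank F(\lambda)\le r-1<r$, so $\lambda$ is an eigenvalue of $F$ in the sense of~\eqref{eq:defeig2}. Counting with algebraic multiplicity (as in the proof of Theorem~\ref{thm:almostdef}, using that a zero of odd order contributes odd multiplicity and zeros of different eigenfunctions at the same point accumulate), we obtain at least $|n_+(F(a))-n_+(F(b))|$ real eigenvalues in $(a,b)$, as claimed.

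The main conceptual point, rather than any serious obstacle, is verifying that a zero of an eigenfunction indeed witnesses a rank drop of $F$ below its normal rank $r$ (so that it qualifies as an eigenvalue under definition~\eqref{eq:defeig2}); this is why the identically zero eigenfunctions must be accounted for separately. Everything else is a direct transcription of the linear-pencil argument to the nonlinear setting, requiring only continuity (through analyticity) and the Hermitian property on $\R$.
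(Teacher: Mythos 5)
Your proof is correct and follows essentially the same route as the paper: invoking Theorem~\ref{thm:conteigs} to get continuous Rellich eigenfunctions, counting signs at $t=a$ and $t=b$, and applying the intermediate value theorem to the sign-changing eigenfunctions. The extra care you take (using that $a,b$ are not eigenvalues to exclude vanishing at the endpoints, and checking that a zero of a non-identically-zero eigenfunction genuinely drops the rank below $r$ so it is an eigenvalue in the sense of~\eqref{eq:defeig2}) only makes explicit details the paper leaves implicit.
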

\begin{proof}
Recalling Theorem~\ref{thm:conteigs}, we examine the $n$ eigenfunctions 
$\lambda_i(t)$, which are continuous: $n_+(F(a))$ of them must be positive at $t=a$ and 
$n_+(F(b))$ of them are positive at $t=b$. Hence, at least $|n_+(F(a))-n_+(F(b))|$ of them must cross the axis of the abscissae at least once. 
\end{proof}

Proposition~\ref{prop:matpoly}, while elementary to prove, can be used to show
easily that the class of \emph{definite matrix polynomials} introduced in~\cite{higham2009definite} have only real eigenvalues, and also to determine their types, which are defined so that they correspond to the signs of the lowest order nonzero derivatives of $\lambda_i(t)$ at the zeros.  
Such matrix polynomials $(F(z):=) P(z)=\sum_{i=0}^k z^iA_i$ of degree $k$ are 
those for which there exist $\{\mu_i\}_{i=1}^{k+1}$ such that $(-1)^{i}P(\mu_i)$ is positive (or negative) definite for all $i$. Then, applying Theorem~\ref{prop:matpoly} with $a\mapsto \mu_i,b\mapsto \mu_{i+1}$ shows $P$ has (at least) $n$ eigenvalues in $(\mu_i,\mu_{i+1})$. Since this holds for all $i$, it follows that all the $nk$ eigenvalues are real; and hence each interval $(\mu_i,\mu_{i+1})$ contains exactly $n$ eigenvalues. We can further derive the sign characteristics of the eigenvalues as the sign of the derivatives of $\lambda_i(t)$ at the zeros, which are all the same for the $n$ eigenvalues in each interval (and the signs must differ between neighboring intervals). 

Proposition~\ref{prop:matpoly} has little resemblance to the original Sylvester's law of inertia, as there is no congruence transformations involved. A weak result can be obtained along this line when $P$ is a matrix polynomial $P(z) = \sum_{i=0}^kz^iA_i$. Specifically, take $a=0,b=\infty$. Then $|n_+(P(a))-n_+(P(b))|$ is determined solely by the leading and trailing coefficients $A_0$ and $A_k$, since 
$n_+(P(a))=n_+(A_0)$ and $n_+(P(b))=n_+(A_k)$. Since $n_+(A_0),n_+(A_k)$ in turn are invariant under congruence transformations $A_0\mapsto X^*A_0X,$ $A_k\mapsto Y^*A_kY$, it follows that any matrix polynomial of the form
 $\widetilde P(z) = z^kY^*A_kY+X^*A_0X+\sum_{i=1}^{k-1}z^iA_i$
 has at least $|n_+(A_0)-n_+(A_k)|$ real positive eigenvalues where $A_i$ for $i=1,\ldots,k-1$ are arbitrary Hermitian matrices and $X,Y$ are arbitrary invertible matrices. Similarly, $\widetilde P(z)$ has at least 
$|n_+(A_0)-n_+(A_k)|$ (resp. $|n_+(A_0)-n_-(A_k)|$) real negative eigenvalues if $k$ is even (resp. odd).

\section{Geometric multiplicities}\label{sec:geom}
Our results so far have been concerned with the algebraic multiplicities of eigenvalues. We now comment on the geometric counterparts. For a definition of algebraic and geometric multiplicities for analytic matrix functions, see~\cite[Def.~2.3]{MNTX16}.
It transpires that most bounds are valid for the geometric multiplicities: 
most of the proofs made no reference to the fact that the algebraic multiplicity is counted. 
Therefore, bounds in the following are valid also for geometric multiplicities: 
Theorem~\ref{thm:main}, the bounds \eqref{eq:posin2}--\eqref{eq:infin2}, Corollaries~\ref{cor:mobius} and~\ref{cor:almostposdef}, and Proposition~\ref{prop:matpoly}. 

An exception is Theorem~\ref{thm:almostdef}. There,  the argument in connection with the multiplicities of the roots of the eigenfunctions apply specifically to the algebraic multiplicities. 
For the geometric multiplicity, a ``less informative'' bound holds:
\begin{equation}  \label{eq:abeignumgeom}
|n_+(A - a B)-n_+(A-b B)| \leq  n_{(a,b)}(A,B) 
\leq |n_+(A - a B)-n_+(A-b B)|   + 2k. 
\end{equation}
The lower bound is essentially~\eqref{eq:simple}, and the upper bound is a direct consequence of~\eqref{eq:abeignum} and the fact that the algebraic multiplicity is an upper bound for the geometric multiplicity. 

It is worth noting that multiple eigenvalues are a nongeneric phenomenon, which do not arise e.g. for random symmetric matrices $A,B$. In such cases the two multiplicities are clearly the same, and the formula \eqref{eq:abeignum} is valid for both types of multiplicities. 

\section{Experiments}\label{sec:exp}
\subsection{Number of real eigenvalues for $B$ nearly definite}
We first examine generalized symmetric indefinite eigenvalue problems $Ax=\lambda Bx$
where $B$ is nearly definite. 

In Figure~\ref{fig:xgxs4} we plot the Rellich's eigenfunctions $\lambda_i(t)$ for 
$F(t)=A-Bt$ with $A=A^T, B=B^T \in \R^{7 \times 7}$ and $B$ indefinite. 
 In both figures, the inertia of $B$ is $(6,0,1)$. 
We verify the bounds in \eqref{eq:realin} hold, which tell us  that $(A,B)$ has 
at least $5$ and at most $7$ real eigenvalues.

\begin{figure}[htpb]
   \begin{tabular}{lr}

  \begin{minipage}[t]{0.5\hsize}
      \includegraphics[height=50mm]{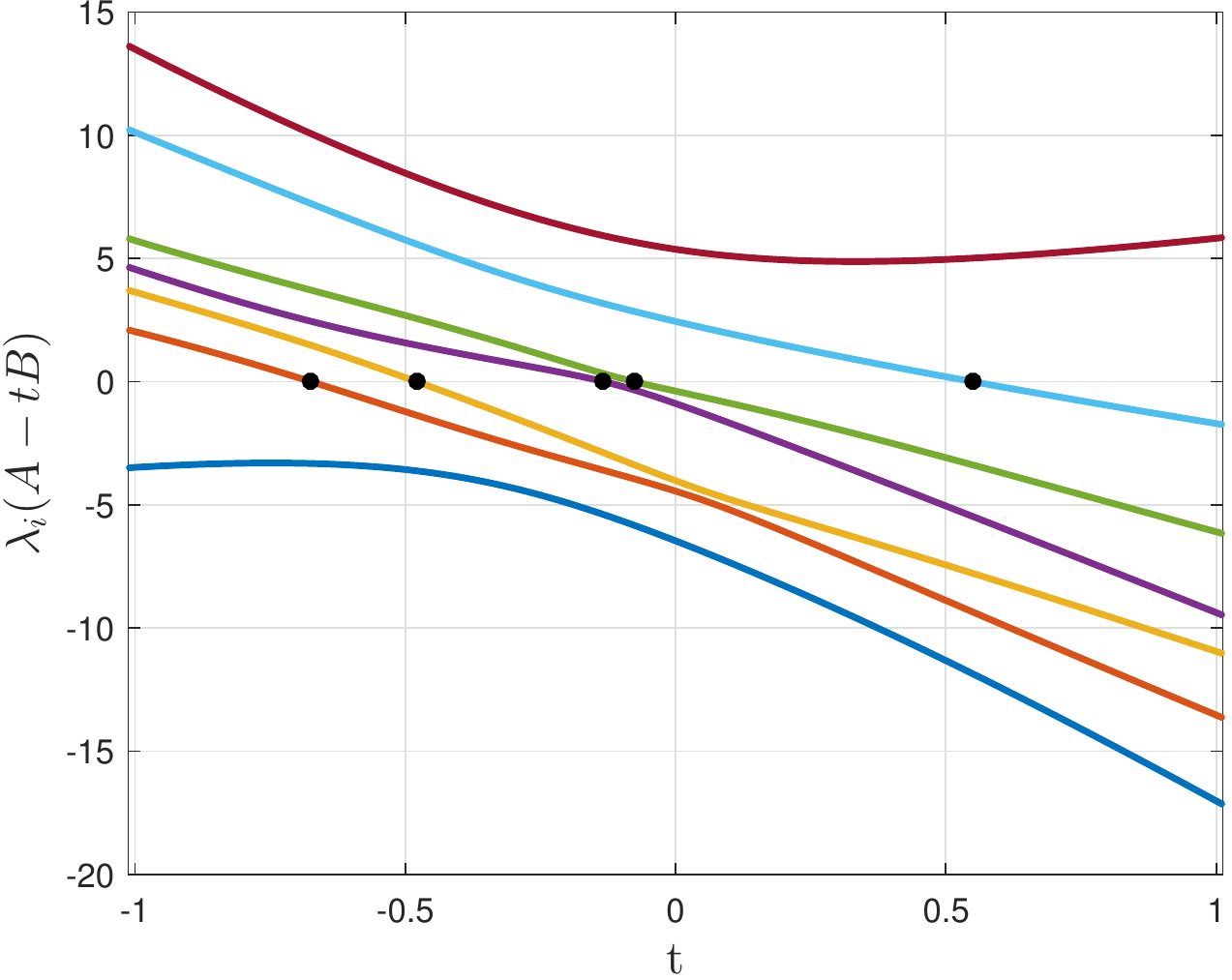}
  \end{minipage} 
  &
  \begin{minipage}[t]{0.5\hsize}
      \includegraphics[height=50mm]{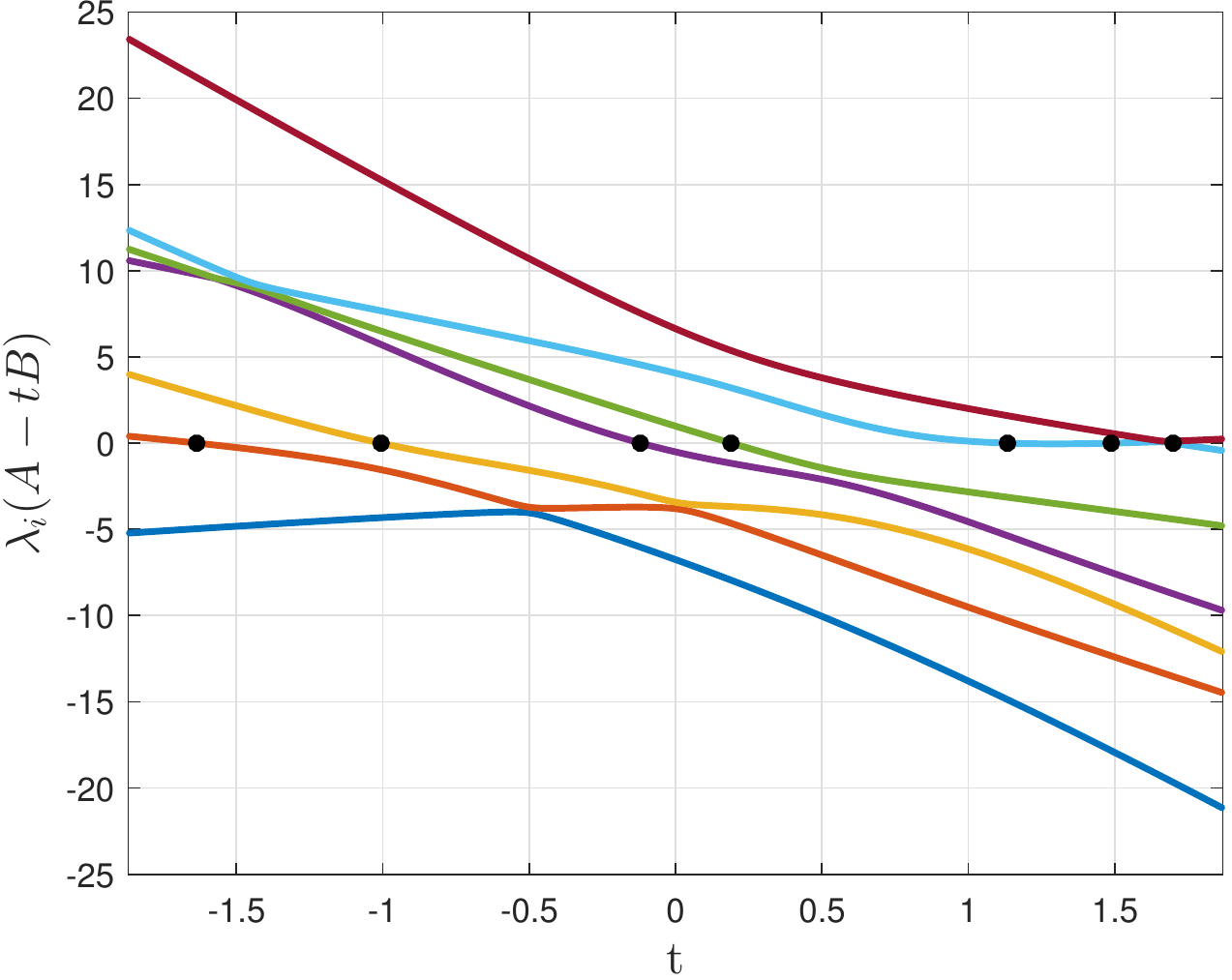}
  \end{minipage}
 \end{tabular}
  \caption{
$\lambda_i(t)$ for two pencils
$A-tB$ with inertia of $B$ fixed to $\inertiB = (6,0,1)$. 
The left pencil has five real eigenvalues, while the right has $n=7$. 
}
  \label{fig:xgxs4}
\end{figure}

Note also the evidently observable phenomenon of eigenvalue repulsion effect in Figures~\ref{fig:xgxs4}, which has been the hallmark of eigenvalue theory in the field of symmetric parameterized eigenvalue problems~\cite[Sec.~9.5]{Laxlaa}. We emphasize the fact that this does not hold if the parameterized matrices are nonsymmetric. 

\subsection{Variation of the inertia of  $(A,B)$ when the inertia of $A,B$ is fixed}
Next we set $B$ as before, but now fix also the inertia $A$ to $(5,0,2)$.
By Theorem~\ref{thm:syl}, all such $A$'s are congruent to each other, and so are all such $B$'s. 
As before, such $(A,B)$ must have at least five real eigenvalues, but more can be said: Theorem~\ref{thm:main} shows that $(A,B)$ has at least four positive eigenvalues, and one negative eigenvalue. Figure~\ref{fig:jj} illustrates this and the sharpness of the bounds. 
\begin{figure}[htpb]
   \begin{tabular}{lr}
  \begin{minipage}[t]{0.5\hsize}
      \includegraphics[height=50mm]{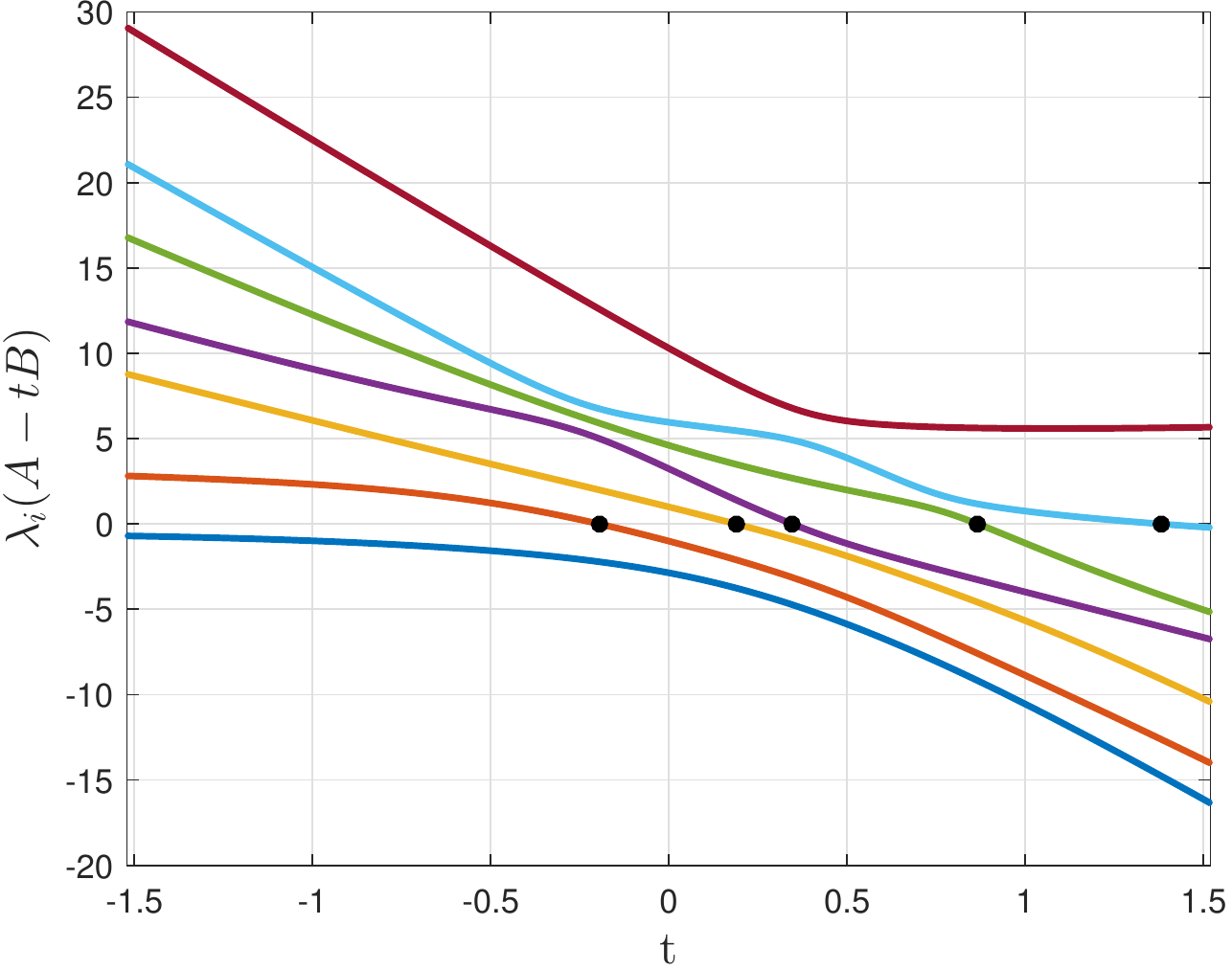}
  \end{minipage} 
  &
  \begin{minipage}[t]{0.5\hsize}
      \includegraphics[height=50mm]{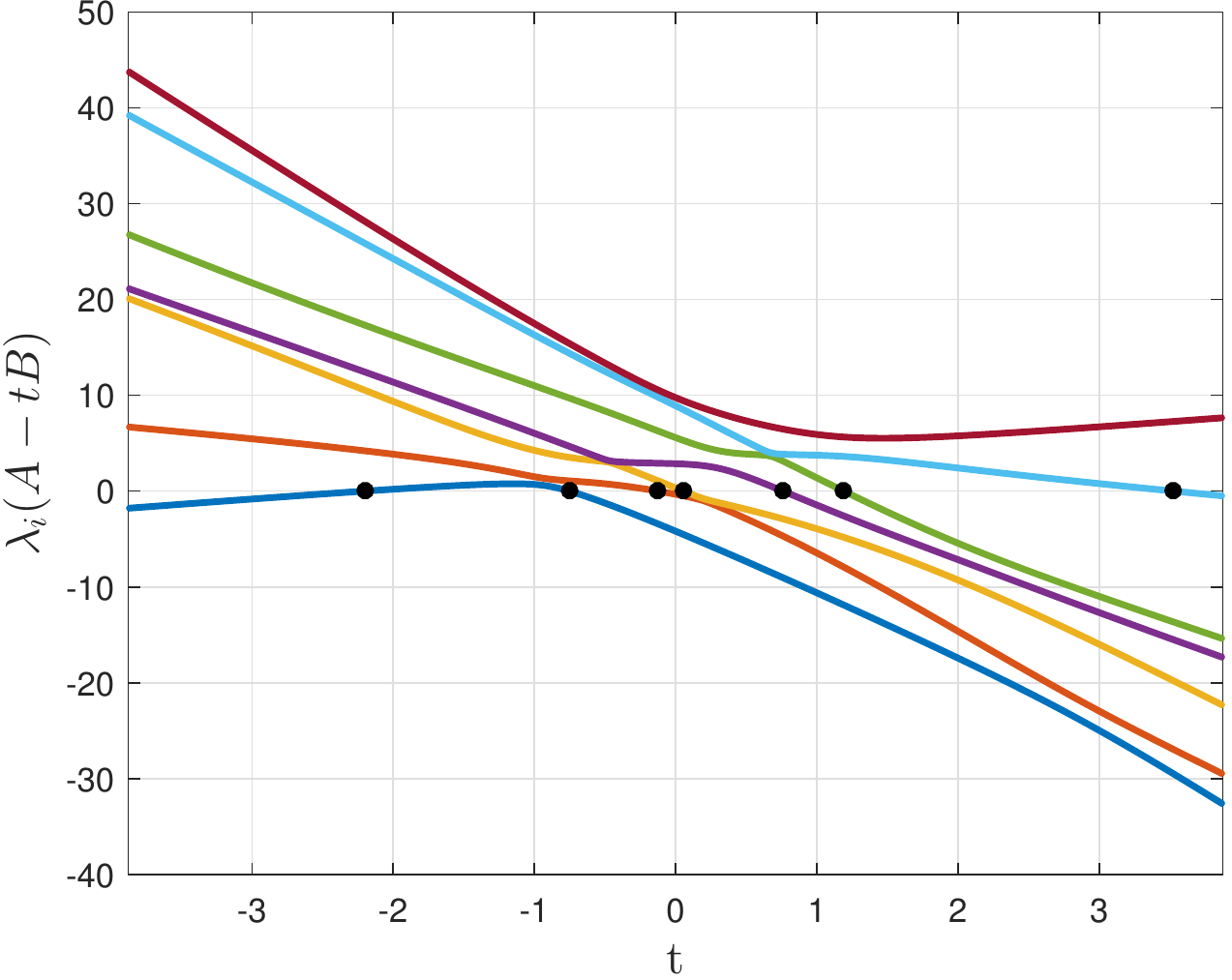}
  \end{minipage}
 \end{tabular}
  \caption{
Same setting as in Figure~\ref{fig:xgxs4}, but with 
 inertia of $A$ fixed to $(n_+(A),n_0(A),n_-(A)) = (5,0,2)$. 
The left pencil has four positive and one negative eigenvalues, while all the eigenvalues of the right pencil are real. 
} 
  \label{fig:jj}
\end{figure}

Using the bound~\eqref{eq:abeignum}, 
 we can obtain significant information on the number of eigenvalues lying in intervals. For example, by computing the inertia of $A-tB$ at $t=-\frac{1}{2}$, which is $(6,0,1)$,  together with that of $B$, we see that both pencils in Figure~\ref{fig:jj} satisfy $n_{(t,\infty)}=4+\{0,2\}$ and  $n_{(-\infty,t)}=\{0,2\}$ (the algebraic multiplicity), and the right pencil 
has $n_{(t,\infty)}=4+\{0,2\}$ and  $n_{(-\infty,t)}=\{0,2\}$. The simple bound~\eqref{eq:simple} is sharp in the left pencil, and $h=1$ in~\eqref{eq:abeignum} for $n_{(t,\infty)}$ of the right pencil.


\subsection{Estimating the number of eigenvalues in an interval}\label{ex:estimate}
Equation~\eqref{eq:abeignum} in Theorem~\ref{thm:almostdef} gives the possible number of eigenvalues of $A-\lambda B$ that lie in an interval $(a,b)$ from $n_+(A - a B)$, $n_+(A-b B)$ and $n_+(B)$ (we make the generic assumption that the eigenvalues are simple). 
Here we illustrate the result and examine a typical value of $2h$ in~\eqref{eq:abeignum}. We generated random symmetric matrices $A,B$ by the MATLAB\footnote{MATLAB is a registered trademark by The MathWorks, Inc.} code {\tt X = randn(n); X = X+X';}, and then shift $B\mapsto B-sI$ , where $s=(\lambda_{k}(B)+\lambda_{k+1}(B))/2$, so that $B$ has exactly $k$ negative eigenvalues. 
We then compute the bound \eqref{eq:abeignum}, along with the exact number 
$s$ of eigenvalues in $(a,b)$, and compute $(2h:=)s-|n_+(A - a B)-n_+(A-b B)| $, the underestimation count of the lower bound $|n_+(A - a B)-n_+(A-b B)|$. 
We took $n=5000,(a,b) = (0,1)$ and generated $1000$ such random instances, and Figure~\ref{fig:hist} shows their histograms for $k=10$ (left) and $k=100$ (right). 
We observe that a typical value of $h$ lies around $0.3k$ in this configuration, and the tail of the distribution becomes lighter as $k$ grows. 
As long as $n$ is large enough, the figures are largely independent of the matrix size $n$; for example, $n = 1000$ gave very similar results.

\begin{figure}[htbp]
  \begin{minipage}[t]{0.5\hsize}
  \centering
      \includegraphics[height=50mm]{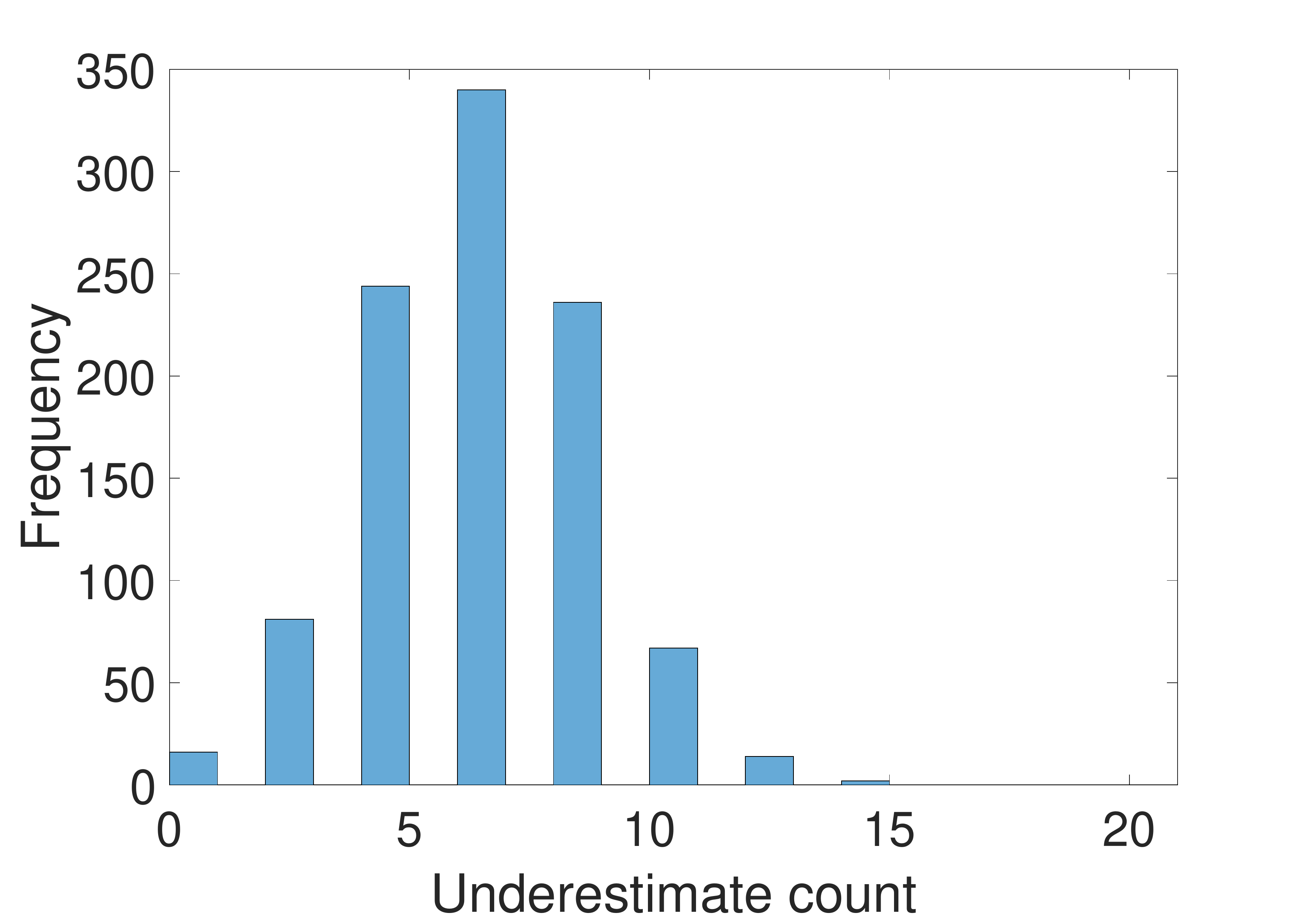}  
  \end{minipage}   
  \begin{minipage}[t]{0.5\hsize}
  \centering
      \includegraphics[height=50mm]{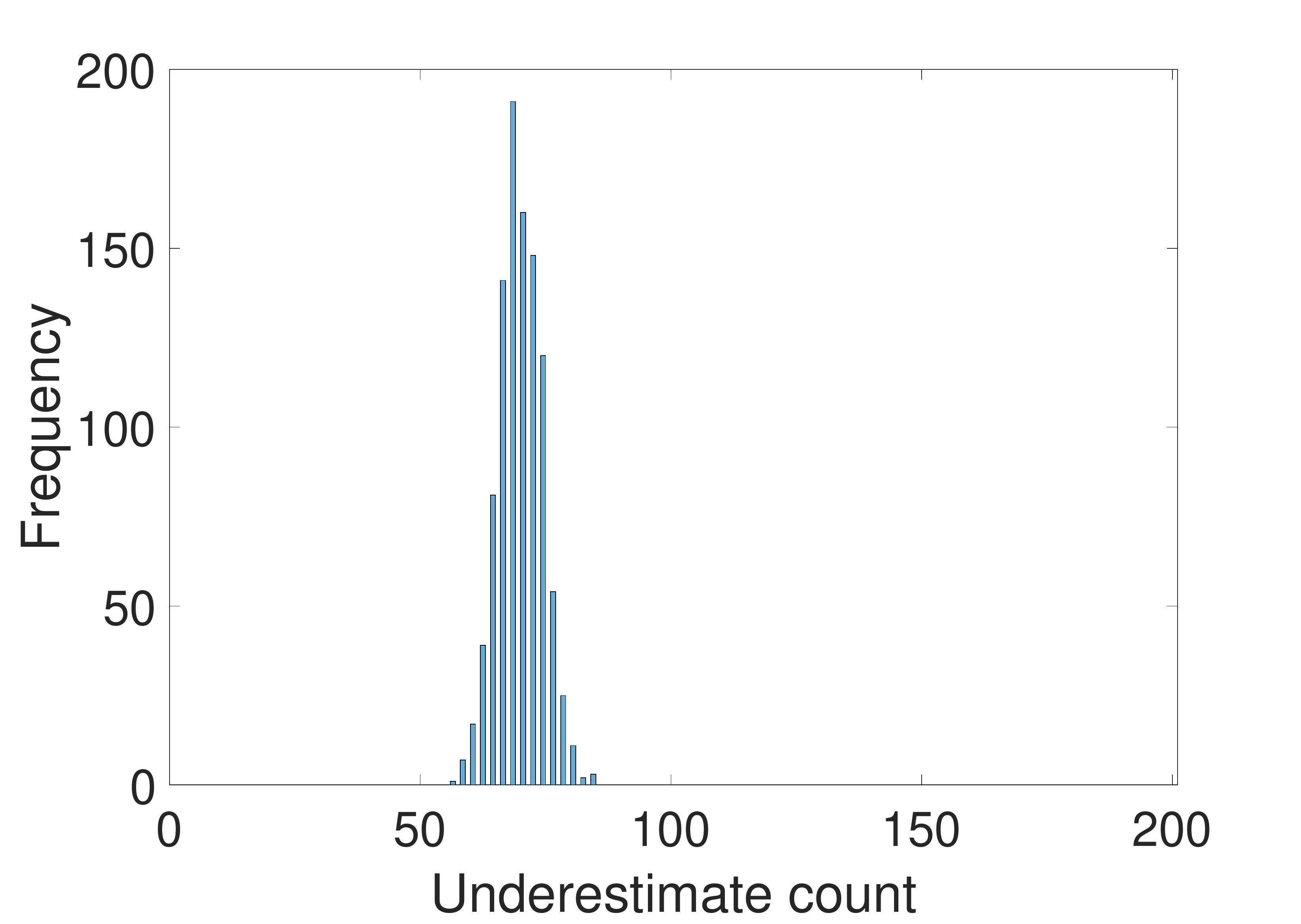}  
  \end{minipage}
  \caption{Histograms from 1000 randomly generated instances of $2h\in\{0,2,4,\ldots,2k\}$ (left: $k=10$, right: $k=100$)  in Theorem~\ref{thm:almostdef}, the difference between the exact number of eigenvalues and its lower bound $|n_+(A - a B)-n_+(A-b B)| $. $A,B\in\mathbb{R}^{5000\times 5000}$. 
}
  \label{fig:hist}
\end{figure}

\subsection{Quadratic matrix polynomials}
\paragraph{Hyperbolic case}
We now turn to hyperbolic quadratic matrix polynomials 
$P(\lambda) = \lambda^2A+\lambda B+C$ with 
$A,B,C\in\mathbb{R}^{n\times n}$ symmetric and $0\succ A, C\succ 0$, which have only real eigenvalues (a special case of definite matrix polynomials). 
As discussed 
in Section~\ref{sec:nonlinear}, it is easy to see why the eigenvalues are all real: the $n$ Rellich's eigenfunctions $\lambda_i(t)$ (setting $F(t)=A t^2 + B t + C$ in Theorem~\ref{thm:conteigs}) alternate in sign between $-M,0,$ and $M$ for any large enough positive constant $M$. 
Figure~\ref{fig:quad} plots the eigenfunctions for an example with  $n=5$.

In fact, for any interval $(a,b)$, one can compute the exact number 
of eigenvalues in it from the inertia of $P(a)$ and $P(b)$: 
for example when $a<0<b$, $n_-(P(b))+n_-(P(a))$ is the number of eigenvalues.
In particular, we can bound the largest (or smallest) eigenvalue $\lambda_{\max}(P)$ (or $\lambda_{\min}(P)$) of $P$ as follows: examine the inertia of $P(t)=t^2A+tB+C$ for $t>0$ ($t<0$), and if they are all negative, then $\lambda_{\max}(P)<t$ ($\lambda_{\min}(P)>t$). 

\begin{figure}[htbp]
  \centering
      \includegraphics[height=50mm]{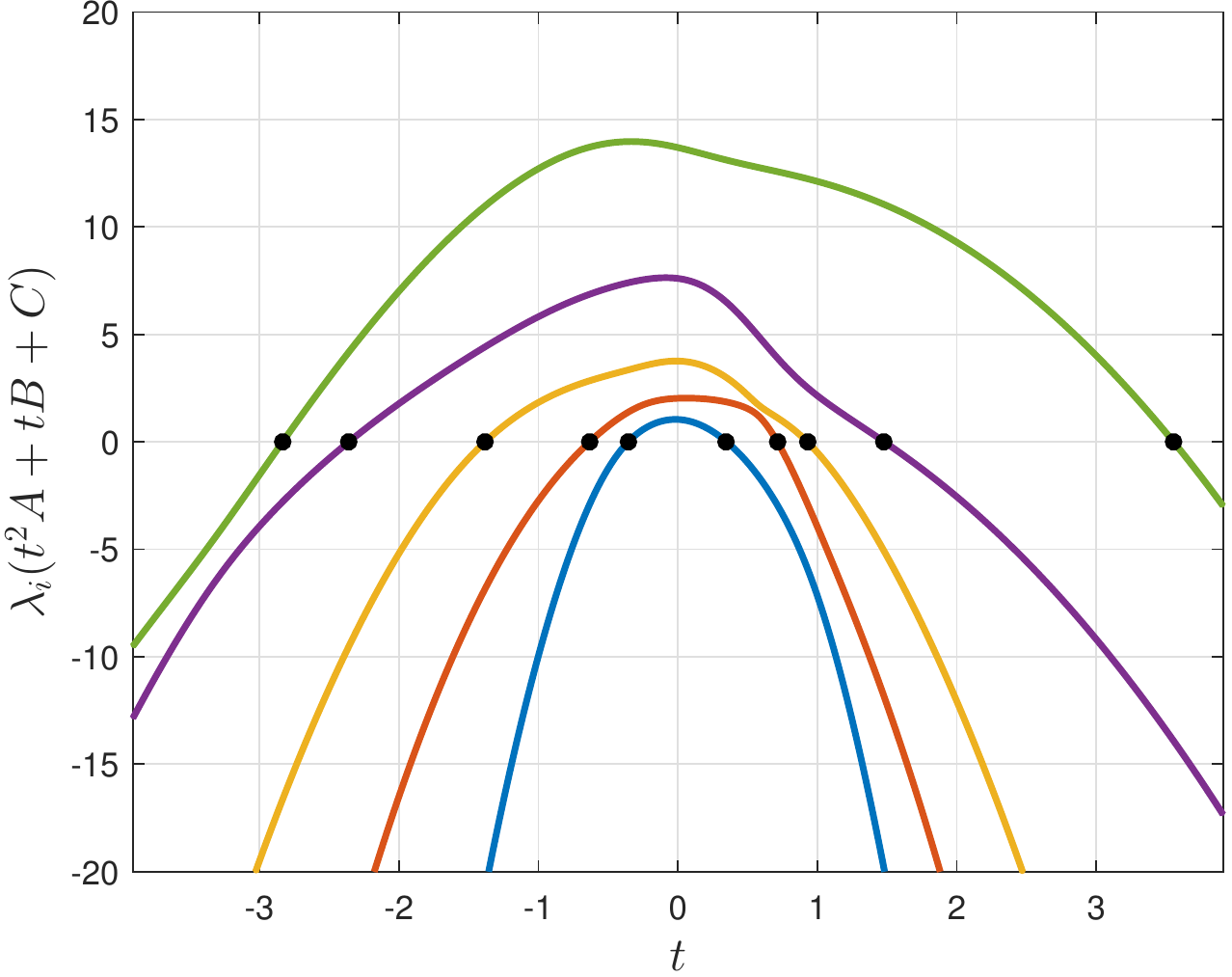}  
  \caption{
Rellich's eigenfunctions for a $5\times 5$ quadratic hyperbolic 
$P(t) = A t^2+ Bt +C$. Note that $\lambda_i(t)$ are all negative for sufficiently small $t$, all positive at $t=0$, and all negative again at sufficiently large $t$. 
}
  \label{fig:quad}
\end{figure}


\paragraph{NLEVP's {\tt spring} example}
We consider the quadratic polynomial eigenvalue problem 
$P(\lambda) = \lambda^2A+\lambda  B+C$ 
in NLEVP~\cite{betcke2013nlevp}  {\tt nlevp('spring',n,1,10*ones(n,1))}, with matrices 
\[
A = I, \quad B =
\beta 
\begin{bmatrix}
20& -10 &&&& \\  
-10& 30 &\ddots&&& \\  
& \ddots &\ddots&\ddots&& \\  
&  &\ddots&30&-10 \\  
&  &&-10&20 \\  
\end{bmatrix}
 \quad C =
\begin{bmatrix}
15& -10 &&&& \\  
-10& 15 &\ddots&&& \\  
& \ddots &\ddots&\ddots&& \\  
&  &\ddots&15&-10 \\  
&  &&-10&15 \\  
\end{bmatrix}. 
\]
As explained in~\cite{ght09a}, for values of $\beta$ larger than $\approx 0.52$, $P$ is definite, but it is indefinite for smaller values of $\beta$. 
For example, taking $\beta=0.3$ and $n=7$, we plot the 
eigenvalues of $F(t) = A t^2+ Bt +C$ in Figure~\ref{fig:quadspring}. 
The inertiae of the matrix $F(t)$ at $t=-13$ and $t=-4$ are 
$(7,0,0)$ and $(3,0,4)$, respectively. 
Thus Proposition~\ref{prop:matpoly} guarantees that $P$ has at least $4$ eigenvalues in the interval $(-13,-4)$. This bound happens to be sharp here; the effectiveness of this approach evidently depends on the choice of the interval (for example, taking the inertia at $t=-15$ and $t=0$ we obtain no useful information). 
\begin{figure}[htbp]
  \begin{minipage}[t]{0.5\hsize}
  \centering
      \includegraphics[height=50mm]{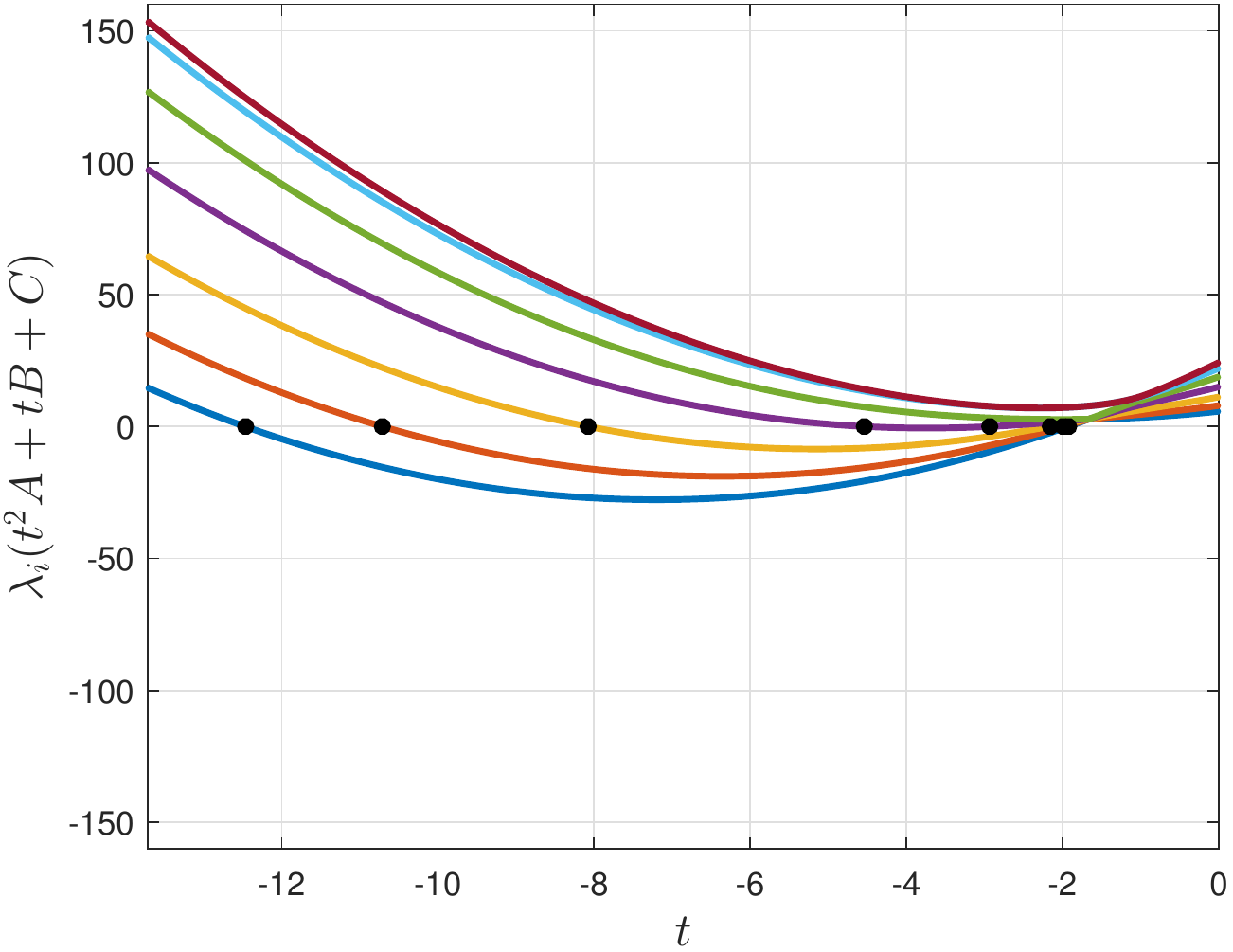}  
  \end{minipage}   
  \begin{minipage}[t]{0.5\hsize}
  \centering
      \includegraphics[height=50mm]{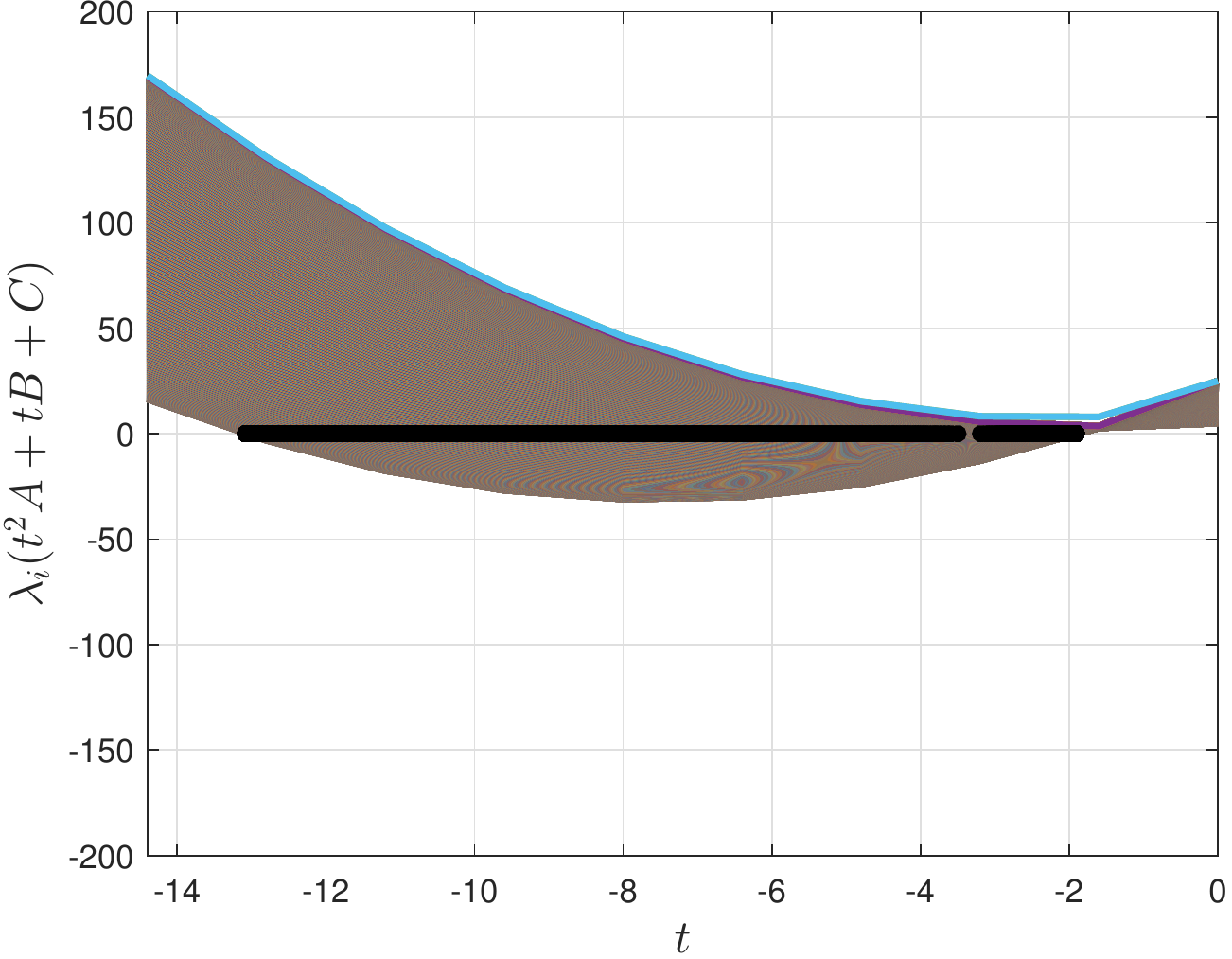}  
  \end{minipage}
  \caption{
Rellich's eigenfunctions for NLEVP's {\tt spring} problem.
$F(t) = A t^2+ \beta Bt +C$, $\beta=0.3$, $n=7$ (left) and $n=1000$ (right). 
This is an indefinite example, but by Proposition~\ref{prop:matpoly} we can obtain nontrivial information on the number of eigenvalues in an interval. 
}
  \label{fig:quadspring}
\end{figure}

The same approach can be used in larger scale. For example, taking $n=1000$ in the same example, and examining the inertia of $F(t)$ at $t = -14$ and $-3.3$, we see that there are at least $626$ eigenvalues in the interval $(-14,-3.3)$, which also happens to be the correct value (see Figure~\ref{fig:quadspring} (right)). 
With the current fastest algorithms avaliable for each task, computing the inertia of $F(t)$ at two values of $t$
(which can be done via computing two $LDL^T$ factorization of $n\times n$ symmetric matrices) is more efficient than computing the eigenvalues of $P$ (for which a standard method requires the solution of a $2n\times 2n$ eigenvalue problem). One can also obtain estimates of the inertia of Hermitian matrices more efficiently by estimating the spectral density~\cite{specdensity2016}.

\subsection{
When Jordan blocks are present}
In all experiments so far, the eigenvalues are simple, and hence the algebraic and geometric multiplicities are always the same. 
Here we examine the case when they are different, that is, when Jordan blocks of size two or larger are present. 
A Jordan block in the canonical form of Hermitian pairs is 
\begin{equation}  \label{eq:jordan}
A =
\begin{bmatrix}
&&&1&\lambda  \\
&&1&\lambda  &\\
&\adots&\adots  &&\\
1&\lambda  &&\\
\lambda  &&
\end{bmatrix}, \qquad 
B =
\begin{bmatrix}
&&&&1  \\
&&&1  &\\
&&\adots  &&\\
&1  &&\\
1  &&
\end{bmatrix}. 
\end{equation}
In such cases, 
the eigenfunctions have zeros of high multiplicity, and Theorem~\ref{thm:main} is not tight.
 We illustrate this in Figure~\ref{fig:jordan}, where $A,B$ are the $6\times 6$ case of~\eqref{eq:jordan}. 
Here the plots of the eigenfunctions of $A-t B$ reveal the eigenvalue $1$ and its geometric multiplicity 1, but it is impossible from the figure alone to identify the algebraic multiplicity $6$ (for this goal, we would need to study the plots of the first, second, $\dots$, fifth derivatives of the $\lambda_i(t)$ as well). 
Theorem~\ref{thm:main}  gives only trivial bounds
$0\leq n_+(A,B)\leq n$, $0\leq n_-(A,B)\leq n$.
The inertia of $A-tB$ is $(3,0,3)$ for any value of $t\neq 1$, and 
our results, e.g.,~\eqref{eq:abeignumgeom} provide no information on the geometric multiplicities of the eigenvalues from the inertia of $A-tB$, unless we take $t=1$. 
Theorem~\ref{thm:almostdef} shows that the algebraic multiplicities of real eigenvalues (if any) are even. Recall that~\eqref{eq:abeignum} in Theorem~\ref{thm:almostdef} is not satisfied for the geometric multiplicity.

\begin{figure}[htbp]
  \centering
      \includegraphics[height=50mm]{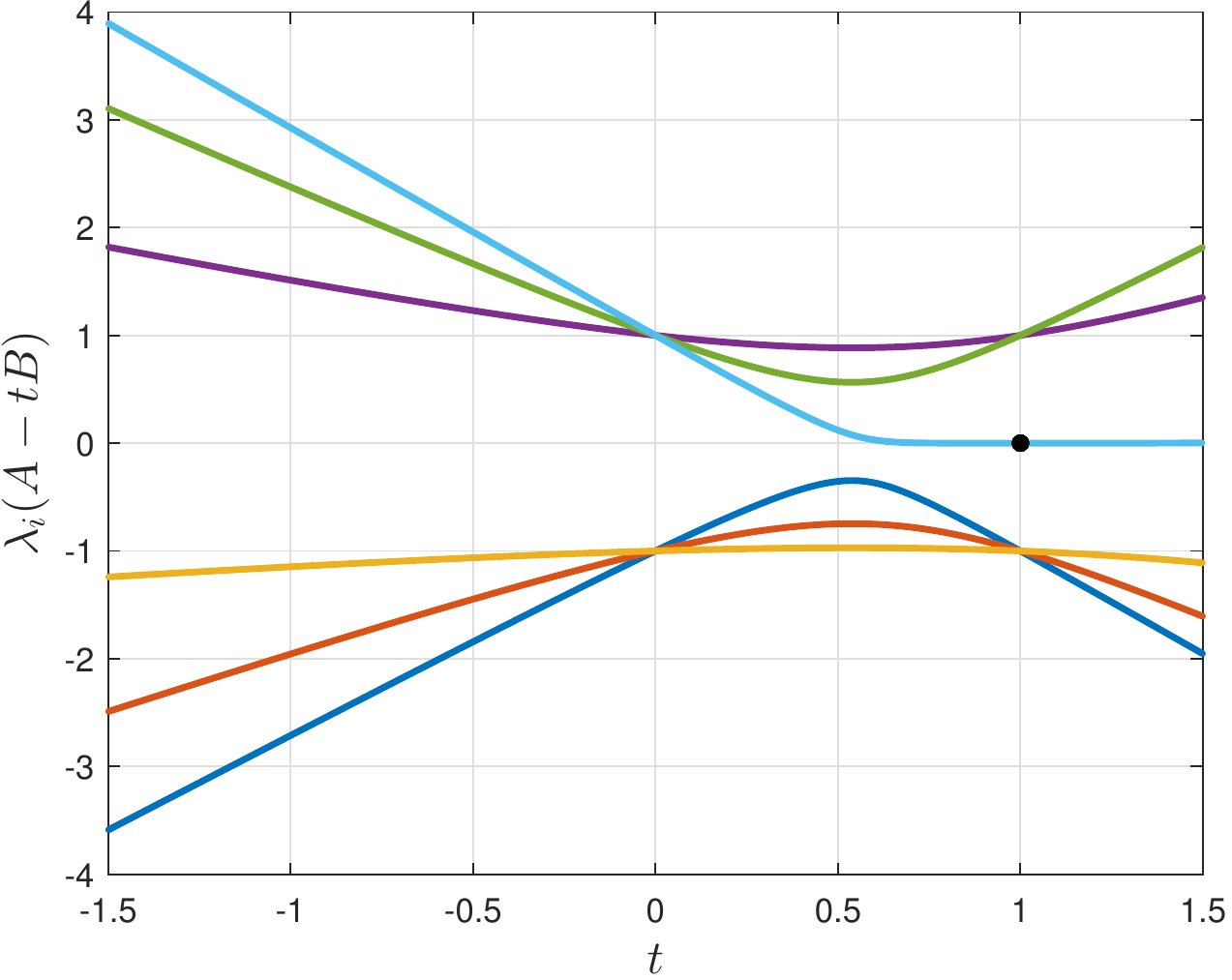}  
  \caption{
Rellich's eigenfunctions of the $A-tB$ for $A,B$ as in~\eqref{eq:jordan}, 
a $6\times 6$ pair with Jordan block of size $6$ at $\lambda=1$. 
}
  \label{fig:jordan}
\end{figure}

We note that, unlike the previous plots,
the curves intersect at $t=0$ and $t=1$ in this example. This nongeneric behavior happens because $A,B$ are in such special forms such that the pencil $A-zB$ has a Jordan block.

\subsection*{Acknowledgement}
We thank Andy Wathen for pointing out the role played by eigenvalues of indefinite pencils in constraint preconditioners.


\begin{thebibliography}{10}

\bibitem{adachi2017solving}
S.~Adachi, S.~Iwata, Y.~Nakatsukasa, and A.~Takeda.
\newblock Solving the trust-region subproblem by a generalized eigenvalue
  problem.
\newblock {\em SIAM J. Optim.}, 27(1):269--291, 2017.

\bibitem{betcke2013nlevp}
T.~Betcke, N.~J. Higham, V.~Mehrmann, C.~Schr{\"o}der, and F.~Tisseur.
\newblock Nlevp: A collection of nonlinear eigenvalue problems.
\newblock {\em ACM Trans. Math. Soft.}, 39(2):7, 2013.

\bibitem{bilirchicone98}
B.~Bilir and C.~Chicone.
\newblock A generalization of the inertia theorem for quadratic matrix
  polynomials.
\newblock {\em Linear Algebra Appl.}, 280:229--240, 1998.

\bibitem{bindel2015localization}
D.~Bindel and A.~Hood.
\newblock Localization theorems for nonlinear eigenvalue problems.
\newblock {\em SIAM Rev.}, 57(4):585--607, 2015.

\bibitem{BNS13}
D.~A. Bini, V.~Noferini, and M.~Sharify.
\newblock Locating the eigenvalues of matrix polynomials.
\newblock {\em SIAM J. Matrix Anal. Appl.}, 34(4):1708--1727, 2013.

\bibitem{dopicosymlin}
M.~I. Bueno, F.~M. Dopico, S.~Furtado, and M.~Rychnovsky.
\newblock Large vector spaces of block-symmetric strong linearizations of
  matrix polynomials.
\newblock {\em Linear Algebra Appl.}, 477:165--210, 2015.

\bibitem{Lathauwer2006}
L.~De~Lathauwer.
\newblock A link between the canonical decomposition in multilinear algebra and
  simultaneous matrix diagonalization.
\newblock {\em SIAM J. Matrix Anal. Appl.}, 28(3):642--666 (electronic), 2006.

\bibitem{dopicoindexsum}
F.~De~Ter\'{a}n, F.~M. Dopico, and D.~S. Mackey.
\newblock Spectral equivalence of matrix polynomials and the index sum theorem.
\newblock {\em Linear Algebra Appl.}, 459:264--333, 2014.

\bibitem{demmelbook}
J.~Demmel.
\newblock {\em Applied Numerical Linear Algebra}.
\newblock SIAM, Philadelphia, USA, 1997.

\bibitem{di2016efficient}
E.~Di~Napoli, E.~Polizzi, and Y.~Saad.
\newblock Efficient estimation of eigenvalue counts in an interval.
\newblock {\em Numer. Lin. Alg. Appl.}, 23(4):674--692, 2016.

\bibitem{ght09a}
C.-H. Guo, N.~J. Higham, and F.~Tisseur.
\newblock An improved arc algorithm for detecting definite {Hermitian} pairs.
\newblock {\em SIAM J. Matrix Anal. Appl.}, 31(3):1131--1151, 2009.

\bibitem{higham2009definite}
N.~J. Higham, D.~S. Mackey, and F.~Tisseur.
\newblock Definite matrix polynomials and their linearization by definite
  pencils.
\newblock {\em SIAM J. Matrix Anal. Appl.}, 31(2):478--502, 2009.

\bibitem{hornjohn}
R.~A. Horn and C.~R. Johnson.
\newblock {\em Matrix {A}nalysis}.
\newblock Cambridge University Press, second edition, 2012.

\bibitem{ikramov01}
K.~D. Ikramov.
\newblock On the inertia law for normal matrices.
\newblock {\em Dokl. Math.}, 64:141£ü142, 2001.

\bibitem{kato}
T.~Kato.
\newblock {\em Perturbation Theory for Linear Operators}.
\newblock Springer-Verlag, 2nd edition, 1966.

\bibitem{keller2000constraint}
C.~Keller, N.~I.~M. Gould, and A.~J. Wathen.
\newblock Constraint preconditioning for indefinite linear systems.
\newblock {\em SIAM J. Matrix Anal. Appl.}, 21(4):1300--1317, 2000.

\bibitem{kosti2013sylvester}
A.~Kosti\'{c} and H.~Voss.
\newblock On {S}ylvester's law of inertia for nonlinear eigenvalue problems.
\newblock {\em Electron. Trans. Numer. Anal}, 40:82--93, 2013.

\bibitem{lancaster2005canonical}
P.~Lancaster and L.~Rodman.
\newblock Canonical forms for {H}ermitian matrix pairs under strict equivalence
  and congruence.
\newblock {\em SIAM Rev.}, 47(3):407--443, 2005.

\bibitem{Laxlaa}
P.~Lax.
\newblock {\em Linear Algebra and Its Applications}.
\newblock Wiley, 2nd edition, 2007.

\bibitem{specdensity2016}
L.~Lin, Y.~Saad, and C.~Yang.
\newblock Approximating spectral densities of large matrices.
\newblock {\em SIAM Rev.}, 58(1):34--65, 2016.

\bibitem{Mackey05vectorspaces}
D.~S. Mackey, N.~Mackey, C.~Mehl, and V.~Mehrmann.
\newblock Vector spaces of linearizations for matrix polynomials.
\newblock {\em SIAM J. Matrix Anal. Appl.}, 28:971--1004, 2005.

\bibitem{MNTX16}
V.~Mehrmann, V.~Noferini, F.~Tisseur, and H.~Xu.
\newblock On the sign characteristics of {H}ermitian matrix polynomials.
\newblock {\em Linear Algebra Appl.}, 511:328--364, 2016.

\bibitem{biroots}
Y.~Nakatsukasa, V.~Noferini, and A.~Townsend.
\newblock Computing the common zeros of two bivariate functions via
  {B}{\'e}zout resultants.
\newblock {\em Numer. Math.}, 129:181--209, 2015.

\bibitem{m4revsimax}
Y.~Nakatsukasa, V.~Noferini, and A.~Townsend.
\newblock Vector spaces of linearizations for matrix polynomials: a bivariate
  polynomial approach.
\newblock {\em SIAM J. Matrix Anal. Appl.}, 38(1):1--29, 2017.

\bibitem{NST15}
V.~Noferini, M.~Sharify, and F.~Tisseur.
\newblock Tropical roots as approximations to eigenvalues of matrix
  polynomials.
\newblock {\em SIAM J. Matrix Anal. Appl.}, 36(1):138--157, 2015.

\bibitem{rellich37}
F.~Rellich.
\newblock St\"{o}rungstheorie der {S}pektralzerlegung {I}.
\newblock {\em Math. Anal.}, 113:600--619, 1937.

\bibitem{rellichbook}
F.~Rellich.
\newblock {\em Perturbation Theory of Eigenvalue Problems}.
\newblock Gordon and {B}reach, 1969.

\bibitem{syl1852}
J.~J. Sylvester.
\newblock A demonstration of the theorem that every homogeneous quadratic
  polynomial is reducible by real orthogonal substitutions to the form of a sum
  of positive and negative squares.
\newblock {\em Philosoph. Mag.}, 4(32):138£ü142, 1852.

\end{thebibliography}
\end{document}